\theoremstyle{plain}
\newtheorem{theorem}{Theorem}
\theoremstyle{remark}
\theoremstyle{plain}
\theoremstyle{plain}
\theoremstyle{plain}
\newtheorem{lemma}[theorem]{Lemma}
\theoremstyle{plain}
\newtheorem{proposition}[theorem]{Proposition}
\theoremstyle{definition}
\newcommand{\beq}{\begin{equation}}
\newcommand{\eeq}{\end{equation}}
\newcommand{\RR}{\mathbb{R}}
\newcommand{\NN}{\mathbb{N}}
\newcommand{\EE}{\mathbb{E}}
\newcommand{\test}{\phi}
\newcommand{\step}{s}
\newcommand{\multiindex}{\alpha}
\newcommand{\hess}{\nabla^2}
\begin{document}








\author{Thomas Bonis\\
	DataShape team, Inria Saclay, Universit\'e Paris-Saclay, Paris, France \\
	thomas.bonis@inria.fr}

	\title{Stein's method for normal approximation in Wasserstein distances with application to the multivariate Central Limit Theorem}

	\maketitle

\begin{abstract}
We use Stein's method to bound the Wasserstein distance of order $2$ between a measure $\nu$ and the Gaussian measure using a stochastic process $(X_t)_{t \geq 0}$ such that $X_t$ is drawn from $\nu$ for any $t > 0$. If the stochastic process $(X_t)_{t \geq 0}$ satisfies an additional exchangeability assumption, we show it can also be used to obtain bounds on Wasserstein distances of any order $p \geq 1$. Using our results, we provide optimal convergence rates for the multi-dimensional Central Limit Theorem in terms of Wasserstein distances of any order $p \geq 2$ under simple moment assumptions. 
\end{abstract}

\section{Introduction}
Consider $n$ independent and, for simplicity, identically distributed random variables $X_1,\dots,X_n$ taking values in $\RR^d$ such that $\EE[X_1] = 0$ and $\EE[X_1 X_1^T] = I_d$. By the Central Limit Theorem, it is well-known that, as $n$ grows to infinity, the law $\nu_n$ of $S_n = \frac{1}{\sqrt{n}} \sum_{i=1}^n X_i$ converges to the $d$-dimensional Gaussian measure $\gamma$. In order to strengthen this result, one can quantify this convergence for a given distance on the space of measures on $\RR^d$. Let us consider the family of Wasserstein distances of order $p \geq 1$, defined between any two measures $\mu$ and $\nu$ with finite moment of order $p$ by 
\[
W_{p}(\nu,\mu)^p =  \inf_{\pi}  \int_{\RR^d \times \RR^d} \|y-x\|^p \pi(dx,dy),
\]  
where $\|\cdot\|$ denotes the Euclidean norm and $\pi$ is a measure on $\RR^d \times \RR^d$ with marginals $\mu$ and $\nu$. 
In the univariate setting, rates of convergence for these distances have been obtained in \citep{rio2009} for $p \in [1,2]$ and in \citep{Bobkov} for $p > 2$. More precisely, for any $p \geq 1$, there exists a constant $C_p > 0$ such that 
\beq
\label{eq:1dim}
W_p(\nu_n, \gamma) \leq \frac{C_p \EE[|X_1|^{p+2}]^{1/p}}{\sqrt{n}}.
\eeq
Furthermore, Theorem 5.1 \citep{rio2009} guarantees this bound to be tight in the general case. In the multivariate setting, convergence rates for the Wasserstein distance of order $2$ have been obtained under the assumption that $\|X_1\| \leq \beta$ with $\beta > 0$, see \citep{Zhai} and \citep{Eldan}, in which case there exists $C > 0$ such that 
\[
W_2(\nu_n, \gamma) \leq C \beta \sqrt{\frac{d \log n}{n}}.
\]
As this result is $\sqrt{\log n}$ short of optimality in the one-dimensional case, it is conjectured in \citep{Zhai} that 
\beq
\label{eq:Zhaiconjecture}
W_2(\nu_n, \gamma) \leq C \beta \sqrt{\frac{d}{n}}
\eeq
and such a bound is known to be matched thanks to Proposition 2 \citep{Zhai}. Let us note that, since $\beta$ is greater than $\sqrt{d}$, this bound scales at least linearly with respect to the dimension which is probably suboptimal in many cases. Indeed, whenever the coordinates of the $X_i$ are i.i.d. random variables with fourth moment equal to $C > 0$, one can use (\ref{eq:1dim}) to obtain the following bound, scaling with $\sqrt{d}$, 
\[
W_2(\nu_n, \gamma) \leq C_2  \sqrt{\frac{C d}{n}}. 
\]
This optimal scaling with respect to the dimension as well as the optimal dependency in $n$ can be obtained whenever the measure of the $X_i$ satisfies a Poincar\'e inequality with constant $C \geq 1$ in which case Theorem~4.1 \citep{Fathi} guarantees that 
\beq
\label{eq:Fathi}
W_2(\nu_n, \gamma) \leq  \sqrt{\frac{(C-1)d}{n}}
\eeq
and similar bounds have also been obtained for Wasserstein distances of any order $p \geq 1$ in \citep{Fathi2}. However, for a measure to satisfy a Poincaré inequality is a strong assumption compared to the simple moment assumption required in the univariate case.

Inequality (\ref{eq:Fathi}) is derived through an approach introduced in \citep{Stein} relying on a object called Stein kernel. Given a probability measure $\nu$ supported on $\RR^d$, a Stein kernel for $\nu$ is a matrix-valued function $\tau_\nu$ such that, for any smooth function $\test$ with compact support,
\[
\int_{\RR^d} -x \cdot \nabla \test(x) + \langle \tau_\nu(x),\nabla^2 \test(x)\rangle_{HS} d\nu(x) = 0,
\]
where $\langle \cdot,\cdot \rangle_{HS}$ is the Hilbert-Schmidt scalar product and $\nabla^2 \test$ denotes the Hessian matrix of $\test$. Since $\nu$ is equal to the Gaussian measure $\gamma$ if and only if $\tau_\nu = I_d$, one can expect $\nu$ to be close to $\gamma$ whenever $\tau_\nu$ is close to $I_d$. This intuition is formalized by the following bound, obtained in Proposition 3.1 \citep{Stein}, 
\[
W_2(\nu, \gamma)^2 \leq \int_{\RR^d} \|\tau_\nu - I_d\|_{HS}^2 d\nu,
\]
where $\| \cdot \|_{HS}$ is the Hilbert-Schmidt norm. 
Furthermore, if $\tau_\nu$ also verifies 
\[
\int_{\RR^d} -x \test(x) + \tau_\nu(x) \nabla \test(x) d\nu(x) = 0,
\]
for any suitable function $\test$, then, by Proposition 3.4 \citep{Stein}, one also has 
\[
W_p(\nu, \gamma)^p \leq C_p \int_{\RR^d} \|\tau_\nu - I_d\|_{p}^p d\nu, 
\]
where $\| \cdot \|_p$ is the Schatten $p$-norm and $C_p >  0$ is a constant depending only on $p$. However, as Stein kernels do not necessarily exist for general measures and can be difficult to compute whenever they do exist, they are not an adequate tool to generalize (\ref{eq:1dim}). 

In this work, we wish to apply the approach developed in \citep{Stein} by replacing Stein kernels with more practical operators $\mathcal{L}_\nu$ satisfying the following property 
\[
\forall \test \in \mathcal{C}^\infty_c, \int_{\RR^d} \mathcal{L}_\nu \test d \nu = 0,
\]
where $\mathcal{C}^\infty_c$ denotes the space of smooth functions with compact support. When an operator $\mathcal{L}_\nu$ verifies this property, in which case we say $\nu$ is invariant under $\mathcal{L}_\nu$, one can expect $\nu$ to be close to $\gamma$ as soon as $\mathcal{L}_\nu$ is similar to the operator $\mathcal{L}_\gamma$ defined by 
\[
\forall \test \in \mathcal{C}^\infty_c, x \in \RR^d, \mathcal{L}_\gamma \test (x) = - x \cdot \nabla \test(x) + \langle I_d, \nabla^2 \test (x) \rangle_{HS}.
\] 
There are many ways to obtain operators $\mathcal{L}_\nu$ under which $\nu$ is invariant; in fact, such operators have been extensively used in Stein's method. For instance, the original approach of Stein \citep{OriginalStein} and its extension to the multidimensional setting  \citep{reinert2009} use pairs of random variables $(X,X')$ both drawn from $\nu$ and such that $(X,X')$ and $(X', X)$ follow the same law. Given such a pair of random variables $(X,X')$, which is called an exchangeable pair, $\nu$ is invariant under the operator $\mathcal{L}_\nu$ defined by 
\beq
\label{eq:exchpairs} 
\forall \test \in \mathcal{C}^\infty_c, x \in \RR^d, \mathcal{L}_\nu \test (x) = \frac{1}{s} \EE[(X' - X) (\test(X') + \test(X)) \mid X = x],
\eeq
where $s >  0$ is a rescaling factor. This operator $\mathcal{L}_\nu$ can then be compared to $\mathcal{L}_\gamma$ using a Taylor expansion. 
In fact, one does not even need an exchangeable pair to apply Stein's method in dimension one. Indeed, as shown by \citep{Rollin}, one can use two random variables $X,X'$, both drawn from $\nu$ but not necessarily forming an exchangeable pair, to construct operators of the form 
\beq
\label{eq:rollinoperator} 
\forall \test \in \mathcal{C}^\infty_c, x \in \RR^d, \mathcal{L}_\nu \test (x) = \frac{1}{s} \EE\left[\int_{0}^{X'} \test(y) dy  - \int_{0}^{X} \test(y) dy \mid X = x\right].
\eeq
Similarly, many other constructs used to apply Stein's method such as zero-bias coupling \citep{zerobias} and size-bias coupling \citep{sizebias} correspond to operators under which $\nu$ is invariant. 

Among these various operators, those defined in (\ref{eq:rollinoperator}) are perhaps the easiest to obtain as they can be constructed from any two random variables $X,X'$ both drawn from the measure $\nu$. However, since there is no notion of primitive functions in higher dimension, such operators are restricted to the univariate setting. Still, in the multidimensional setting, one can use any two random variables $X$ and $X'$ drawn from $\nu$ to define an operator under which $\nu$ is invariant by taking 
\beq 
\label{eq:myoperator}
\forall \test \in \mathcal{C}^\infty_c, \forall x \in \RR^d, \mathcal{L}_\nu \test (x) = \frac{1}{s} \EE[\test(X') - \test(X) \mid X = x]. 
\eeq
Then, given any $\test \in \mathcal{C}^\infty_c$, one can use a Taylor expansion to obtain
\begin{multline*}
s \mathcal{L}_\nu \test (x) = \EE[(X'-X) \cdot \nabla \test(X) + \frac{1}{2} \langle (X'-X) (X'-X)^T, \nabla^2 \test(X) \rangle_{HS} \mid X = x]  \\
+ \mathcal{O}(\EE[\|X'-X\|^3 \mid X = x]). 
\end{multline*}
Thus, one can expect that if 
\begin{itemize}
\item $\frac{\EE[X'-X \mid X]}{s} \approx - X$ ;
\item $\frac{\EE[(X'-X) (X'-X)^T \mid X]}{2s} \approx I_d$ and
\item $\frac{\|X'-X\|^3}{s} \approx 0$
\end{itemize}
then $\mathcal{L}_\nu$ would be similar to $\mathcal{L}_\gamma$ and thus $\nu$ be close to $\gamma$. 
However, one cannot prove such a result by applying the approach of \cite{Stein} to such operators. 
Instead, we use stochastic processes $(X_t)_{t \geq 0}$ such that $X_t$ is drawn from $\nu$ for any $t \geq 0$ and such that $\EE[\|X_t - X_0\|]$ does not grow too fast with respect to $t$ to define a family of operators under which $\nu$ is invariant by taking 
\beq 
\label{eq:operator}
\forall t >  0, \test \in \mathcal{C}^\infty_c, x \in \RR^d, (\mathcal{L}_\nu)_t \test (x) = \frac{1}{s} \EE[\test(X_t) - \test(X_0) \mid X_0 = x].
\eeq
In Theorem~\ref{thm:mainGauss}, we derive bounds for the Wasserstein distance of order $2$ between $\nu$ and the Gaussian measure from such a family of operators. We also provide bounds on Wasserstein distances of any order $p \geq 1$ for one-dimensional normal approximation in Theorem~\ref{thm:WpGauss} and for multidimensional normal approximation in Theorem~\ref{thm:WpGaussexch}. This latter result uses a family of operators of the form (\ref{eq:exchpairs}) and thus requires the pairs $(X_t, X_0)$ and $(X_0, X_t)$ to follow the same law for any $t > 0$.
Let us note that, while we mostly focus on operators defined in (\ref{eq:operator}), proofs of our results can easily be adapted to other operators $\mathcal{L}_\nu$ under which $\nu$ is invariant such as size-bias or zero-bias couplings.  

Our results can be readily applied to obtain rates in the Central Limit Theorem. Indeed, letting $X'_1,\dots,X'_n$ be independent copies of $X_1, \dots,X_n$ and $I$ be a uniform random variable on $\{1,\dots,n\}$, the stochastic process $((S_n)_{t})_{t \geq 0}$ defined by 
\[
\forall t \geq 0, (S_n)_t = S_n + \frac{(X_I' - X_I) 1_{\|X_I\| \vee \|X'_I\| \leq \sqrt{n (e^{2t} - 1)}}}{\sqrt{n}}
 \]
 is such that $((S_n)_t, (S_n)_0)$ and $((S_n)_0, (S_n)_t)$ follow the same law for any $t \geq 0$. Applying our results to this stochastic process, we obtain the following bounds.
\begin{theorem}
\label{thm:CTLmain}
Under the above setting, if $\EE[\|X_1\|^4] < \infty$, then there exists $C < 14$ such that 
\beq
\label{eq:W2mainthm}
W_2(\nu_n, \gamma) \leq \frac{C d^{1/4} \|\EE[X_1 X_1^T \|X_1\|^{2}]\|_{HS}^{1/2}}{\sqrt{n}}.
\eeq
Furthermore, if $\EE[\|X_1\|^{p+2}] <  \infty$ for $p \geq 2$, then there exists $C_p > 0$ depending only on $p$ and such that 
\beq
\label{eq:Wpmainthm}
W_p(\nu_n, \gamma) \leq C_p \frac{d^{1/4} \|\EE[X_1 X_1^T \|X_1\|^{2}]\|_{HS}^{1/2} + \EE[\|X_1\|^{p+2}]^{1/p}}{\sqrt{n}}.
\eeq
\end{theorem}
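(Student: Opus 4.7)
The plan is to apply the general bounds of Theorem~\ref{thm:mainGauss} (for $p=2$) and Theorem~\ref{thm:WpGaussexch} (for $p \geq 2$) to the exchangeable stochastic process $((S_n)_t)_{t\geq 0}$ defined in the statement, and to control the resulting moment quantities in terms of $\EE[X_1 X_1^T \|X_1\|^2]$ and $\EE[\|X_1\|^{p+2}]$. The structural hypotheses of those theorems are readily checked: because $X_I$ and $X_I'$ play symmetric roles in $(S_n)_t$ and the truncation event $\{\|X_I\|\vee\|X_I'\|\leq \sqrt{n(e^{2t}-1)}\}$ is symmetric in the two variables, swapping $X_I$ and $X_I'$ preserves the joint law; this ensures $(S_n)_t \sim \nu_n$ for every $t\geq 0$ and that $((S_n)_t,(S_n)_0)$ is exchangeable, which is exactly what Theorem~\ref{thm:WpGaussexch} requires.

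Setting $R_t = \sqrt{n(e^{2t}-1)}$, I would then compute the conditional first and second moments of the increment. Using independence of $X_I'$ from $(S_n,I,X_I)$ together with $\EE[X_I\mid S_n] = S_n/\sqrt{n}$, a direct calculation yields
\[
\EE\bigl[(S_n)_t - S_n\mid S_n\bigr] = -\frac{S_n}{n}(1-p_t) + r^{(1)}_t,
\]
\[
\EE\bigl[((S_n)_t-S_n)((S_n)_t-S_n)^T\mid S_n\bigr] = \frac{\EE[X_1 X_1^T 1_{\|X_1\|\le R_t}] + \EE[X_I X_I^T 1_{\|X_I\|\le R_t}\mid S_n]}{n} + r^{(2)}_t,
\]
where $p_t = \PP(\|X_1\|>R_t)$ and $r^{(1)}_t, r^{(2)}_t$ are tail remainders involving $\EE[X_1 1_{\|X_1\|>R_t}]$ and $\EE[X_1 X_1^T 1_{\|X_1\|>R_t}]$. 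The parameterisation $R_t = \sqrt{n(e^{2t}-1)}$ is tuned so that, once integrated with the rescaling and exponential weight supplied by the general theorem, the drift normalises to $-x$ and the covariance to $I_d$ up to an error whose Hilbert--Schmidt norm is exactly $\|\EE[X_1 X_1^T \|X_1\|^2]\|_{HS}$. This is the source of the $d^{1/4}$ dimension dependence, since this quantity can be much smaller than the coarser $\sqrt{d}\,\EE[\|X_1\|^4]$ due to off-diagonal cancellations.

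For Theorem~\ref{thm:WpGaussexch}, the Taylor remainder in the expansion of $\test((S_n)_t)-\test(S_n)$ additionally contributes the moment $\EE[\|(S_n)_t-S_n\|^{p+2}]$. Because $\|(S_n)_t-S_n\| = \|X_I'-X_I\|/\sqrt{n}$ on the truncation event, this moment is controlled by $\EE[\|X_1\|^{p+2}]/n^{p/2}$, and integration in $t$ produces the additive $\EE[\|X_1\|^{p+2}]^{1/p}/\sqrt{n}$ term in~(\ref{eq:Wpmainthm}). Combining this with the drift and covariance estimates from the previous step yields both bounds.

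The main obstacle I anticipate is the careful bookkeeping of truncation corrections: I need to show that the tail terms $r^{(1)}_t, r^{(2)}_t$, together with contributions from the $(1-p_t)$ factor in the drift and from the conditional fluctuation $\EE[X_I X_I^T \mid S_n] - I_d$, recombine cleanly under the $t$-weight dictated by Theorem~\ref{thm:mainGauss} to produce exactly $\|\EE[X_1 X_1^T \|X_1\|^2]\|_{HS}^{1/2}$ rather than a coarser $\EE[\|X_1\|^4]^{1/2}$. Tracking absolute constants through this recombination, rather than using generic $\mathcal{O}(\cdot)$ estimates, will also be required to obtain the explicit $C<14$ in~(\ref{eq:W2mainthm}).
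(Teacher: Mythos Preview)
Your overall strategy---apply Theorems~\ref{thm:mainGauss} and~\ref{thm:WpGaussexch} to the exchangeable process $((S_n)_t)_{t\ge0}$ with $s=1/n$ and control the resulting moment sums---is exactly the paper's route. But there is a genuine gap in your account of how $d^{1/4}\|\EE[X_1X_1^T\|X_1\|^2]\|_{HS}^{1/2}$ arises, and without the missing idea the integration in $t$ does not close.

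The covariance error after truncation is essentially $\|\EE[X_1^{\otimes2}\mathbf{1}_{\|X_1\|>R_t}]\|$, divided by $\sqrt{e^{2t}-1}$ and integrated against $e^{-t}\,dt$. If you bound it by the Markov-type estimate $(n(e^{2t}-1))^{-1}\|\EE[X_1^{\otimes2}\|X_1\|^2]\|$, the integrand behaves like $(e^{2t}-1)^{-3/2}$ near $t=0$ and diverges; if you use the trivial bound $\|\EE[X_1^{\otimes2}]\|=\sqrt d$, the integral is $O(1)$ and never decays with $n$. The paper resolves this by taking the minimum over $l\in[0,2]$ of bounds $M(l)/(n(e^{2t}-1))^{(1+l)/2}$ and then splitting the $t$-integral at an optimized $t_0$ (for $m=2$, at $t_0\asymp M(2)/(M(0)n)$), which yields $2\sqrt{M(0)M(2)/n}$. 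Since $M(0)\asymp\sqrt d$ and $M(2)\asymp\|\EE[X_1^{\otimes2}\|X_1\|^2]\|$ under $\EE[X_1^{\otimes2}]=I_d$, this product is what produces the $d^{1/4}\|\cdot\|^{1/2}$ shape---not a single ``error whose Hilbert--Schmidt norm is exactly $\|\EE[X_1X_1^T\|X_1\|^2]\|_{HS}$'' as you suggest. This two-regime integration is the step you are missing.

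Two smaller points. First, Theorems~\ref{thm:mainGauss} and~\ref{thm:WpGaussexch} do not furnish a finite-order Taylor expansion with a ``remainder of order $p+2$''; they involve the full infinite series over $k$. The truncation $R_t=\sqrt{n(e^{2t}-1)}$ is what makes that series summable (each term picks up essentially $4^k/k!$), and for the explicit $C<14$ you must keep the even $k>2$ contributions to $M(l)$, not discard them as a remainder. Second, the diagonal ($J$) terms in the independence expansion yield $\EE[\|X_1\|^4]^{1/2}/\sqrt n$, which is only turned into $d^{1/4}\|\EE[X_1^{\otimes2}\|X_1\|^2]\|^{1/2}/\sqrt n$ via the elementary inequality $\EE[\|X_1\|^4]\le d^{1/2}\|\EE[X_1^{\otimes2}\|X_1\|^2]\|$ (Cauchy--Schwarz on the $d$ coordinates); this is another place where the $d^{1/4}$ enters and should be made explicit.
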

This result both proves (\ref{eq:Zhaiconjecture}) and generalizes (\ref{eq:1dim}). However, our bound still scales at least linearly with respect to the dimension $d$ and thus fails to generalize (\ref{eq:Fathi}) which can scale with $\sqrt{d}$. Our approach can also be used to obtain more general results, presented in Theorems~\ref{thm:W2CTL} and \ref{thm:WPCTL}, which only require the random variables $X_1,\dots,X_n$ to be independent and provide intermediary rates of convergence under weaker moment assumptions.

The paper is organized as follows. In Section~\ref{sec:notations}, we introduce the notations used in the paper. In Section~\ref{sec:approach}, we present the main arguments we use to apply Stein's method and obtain bounds on the Wasserstein distance of order $2$ in normal approximation. The approach followed to obtain bounds on Wasserstein distances of any order $p$ is then detailed in Section~\ref{sec:Wp}. The computations required to apply our general Wasserstein bounds to obtain rates of convergence in the Central Limit Theorem are presented in Sections~\ref{sec:CTLcomp2} and ~\ref{sec:CTLcompp}. 
Finally, Sections~\ref{sec:technical} and \ref{sec:W2approx} contain technical results and approximation arguments used in the course of this paper. 

\section{Notations and definitions}
\label{sec:notations}
Let $d$ be a positive integer. A $d$-dimensional multi-index $\multiindex$ is a $d$-tuple of non-negative integers
\[
\multiindex = (\multiindex_1, \multiindex_2, \dots, \multiindex_d).
\]
The absolute value of a multi-index $\multiindex$ is given by 
\[
|\multiindex| \coloneqq \sum_{i=1}^d \multiindex_i
\]
and its factorial by 
\[
\multiindex ! \coloneqq \prod_{i=1}^d \multiindex_i !.
\]
For any $x \in \RR^d$ and any multi-index $\multiindex$, let
\[
x^\multiindex \coloneqq \prod_{i=1}^d x_i^{\multiindex_i}.
\]
For any $k \in \NN, x \in \RR^d$, we denote by $x^{\otimes k}$ the family indexed by multi-indices with absolute value $k$ and such that 
\[
\forall |\multiindex| = k, (x^{\otimes k})_\multiindex \coloneqq x^\multiindex. 
\]
In this work, we identify any symmetric matrix $M$ to the family indexed by multi-indices with absolute value $2$ by taking $M_{\multiindex} \coloneqq M_{i,i}$ when $\multiindex(i) = 2$ and $M_{\multiindex} \coloneqq M_{i,j}$ when $\multiindex(i) = \multiindex(j) = 1$.
Let $\langle \cdot, \cdot \rangle$ be the Hilbert Schmidt scalar product defined between
any two families $(x_\multiindex)_{|\multiindex| = k}, (y_\multiindex)_{|\multiindex| = k}$ by
\[
\langle x, y \rangle \coloneqq \sum_{|\multiindex| = k} \frac{k!}{\multiindex !} x_\multiindex y_\multiindex,
\]
and, by extension, 
\[
\|x\|^2 \coloneqq \sum_{ |\multiindex| = k} \frac{k!}{\multiindex !} x_\multiindex^2. 
\]
Let us remark that, for any $k \in \NN, x \in \RR^d$, we have 
\[
\|x^{\otimes k}\| = \|x\|^k. 
\]

Let $\mathcal{C}^k$ be the set of functions from $\RR^d$ to $\RR$ with partial derivatives of order $k \in \NN$ and by $\mathcal{C}^k_c$ the set of such functions with compact support. For any multi-index $\multiindex$ and any $\test \in \mathcal{C}^{|\multiindex|}$, let 
\[
\partial^\multiindex \test = \frac{\partial^{\multiindex_1}}{\partial x_1^{\multiindex_1}} \frac{\partial^{\multiindex_2}}{\partial x_2^{\multiindex_2}} \dots  \frac{\partial^{\multiindex_d}}{\partial x_d^{\multiindex_d}} \test.
\]
Let $\nabla^k \test (x)\in (\RR^{d})^{\otimes k}$ be the $k$-th gradient of $\test$ at $x$ defined by 
\[
\forall |\multiindex| = k, (\nabla^k \test (x))_\multiindex \coloneqq \partial^\multiindex \test.
\]

Let $\gamma$ denoted the $d$-dimensional Gaussian measure and let $\mathcal{L}_\gamma$ be the operator defined by
\[
\forall \test \in \mathcal{C}^k_c, x \in \RR^d, \mathcal{L}_\gamma \test (x) \coloneqq - x \cdot \nabla \test(x) + \langle I_d, \nabla^2 \test(x) \rangle. 
\] 
This operator is the infinitesimal generator of the Ornstein-Uhlenbeck semigroup $(P_t)_{t \geq 0}$ whose reversible measure is $\gamma$; see e.g. \citep{Markov} for a thorough presentation of this semigroup and its properties. 

 \section{Bounds for the Wasserstein distance of order $2$}
\label{sec:approach}

In this Section, we prove the following result. 
\begin{theorem}
\label{thm:mainGauss}
Let $\nu$ be a probability  measure on $\RR^d$ with finite second moment and let $(X_t)_{t \geq 0}$ be a stochastic process such that $X_t$ is drawn from $\nu$ for any $t >  0$. 
Suppose that
\beq
\label{eq:condition}
\forall \epsilon >  0, \exists \xi, M >  0, \forall t \in [\epsilon, \epsilon^{-1}],  \EE\left[e^{\frac{(1+\xi) \|X_t - X_0\|^2}{e^{2t}-1}}\right] \leq M. 
\eeq
Then, for any $s >  0$, 
\[
W_2(\nu,\gamma) \leq \int_0^\infty e^{-t} \EE[S(t)]^{1/2} dt, 
\]
where 
\begin{align*}
S(t) = & \left\|\EE\left[\frac{X_t-X_0}{s} + X_0\mid X_0\right] \right\|^2  \\
& + \frac{1}{e^{2t} - 1}\left\|\EE\left[\frac{(X_t-X_0)^{\otimes 2}}{2s} - I_d \mid X_0\right] \right\|^2 \\
& + \sum_{k>2} \frac{1}{ s^2 k k! (e^{2t} - 1)^{k-1} } \|\EE[(X_t-X_0)^{\otimes k}\mid X_0]\|^2.
\end{align*}
\end{theorem}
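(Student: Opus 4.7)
The strategy is to combine the Stein identity induced by the operator $(\mathcal{L}_\nu)_t$ with the Ornstein--Uhlenbeck semigroup $(P_t)_{t\geq 0}$ whose infinitesimal generator is $\mathcal{L}_\gamma$. The first move is to reduce, via the approximation/duality machinery of Section~\ref{sec:W2approx}, the bound on $W_2(\nu,\gamma)$ to a bound on $|\int f\,d\nu - \int f\,d\gamma|$ for a sufficiently rich family of smooth test functions $f$ whose supremum recovers $W_2(\nu,\gamma)$. For such $f$, use the standard semigroup representation
\[
\int f\,d\nu - \int f\,d\gamma = -\int_0^\infty \int \mathcal{L}_\gamma P_t f\,d\nu\,dt,
\]
and then exploit the invariance of $\nu$ under $(\mathcal{L}_\nu)_t$, i.e.\ $\int (\mathcal{L}_\nu)_t P_t f\,d\nu = 0$, to replace $\mathcal{L}_\gamma P_t f$ by $(\mathcal{L}_\gamma - (\mathcal{L}_\nu)_t) P_t f$ inside the integrand, with the index of the Stein operator matched to the semigroup time.

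The next step is to Taylor-expand
\[
(\mathcal{L}_\nu)_t \phi(x) = \frac{1}{s}\sum_{k\ge 1}\frac{1}{k!}\bigl\langle \EE[(X_t - X_0)^{\otimes k}\mid X_0 = x],\,\nabla^k\phi(x)\bigr\rangle,
\]
so that $\mathcal{L}_\gamma\phi - (\mathcal{L}_\nu)_t\phi$ splits into a first-order term with coefficient $-x - s^{-1}\EE[X_t - X_0\mid X_0 = x]$, a second-order term with coefficient $I_d - (2s)^{-1}\EE[(X_t - X_0)^{\otimes 2}\mid X_0 = x]$, and higher-order pieces for $k\geq 3$. I apply pointwise a weighted Cauchy--Schwarz inequality with weights $w_1 = 1$, $w_2 = e^{2t}-1$, and $w_k = (e^{2t}-1)^{k-1}/(k-1)!$ for $k\ge 3$, carefully engineered so that the coefficient side collapses into exactly the expression $S(t)$ in the statement, while the derivative side becomes $\sum_k w_k\|\nabla^k P_t f(x)\|^2$.

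The final analytic input is a derivative bound for the semigroup: from the Mehler representation $P_t f(x) = \EE[f(e^{-t}x + \sqrt{1-e^{-2t}}Z)]$ with $Z\sim\gamma$, combined with iterated Gaussian integration by parts, one obtains
\[
\|\nabla^k P_t f(x)\| \le \frac{e^{-t}}{(e^{2t}-1)^{(k-1)/2}}\, c_{k-1}(f),
\]
where $c_{k-1}(f)$ collects Hermite-polynomial moments depending on $f$. Substituted into the weighted sum, the factors $(e^{2t}-1)^{k-1}$ cancel against the weights, leaving $\sum_k w_k\|\nabla^k P_t f(x)\|^2 \leq C\,e^{-2t}$ uniformly in $x$. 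Combining Cauchy--Schwarz against $\nu$ with the integration in $t$ then gives $|\int f\,d(\nu-\gamma)| \le \sqrt{C}\int_0^\infty e^{-t}\sqrt{\EE[S(t)]}\,dt$, and taking the supremum over the admissible test class yields the announced bound on $W_2(\nu,\gamma)$.

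\textbf{Main obstacle.} The most delicate step is the first one: a bound of the above form for all $1$-Lipschitz $f$ directly controls only $W_1$, and upgrading it to a bound on $W_2$ is what makes the approximation/smoothing framework of Section~\ref{sec:W2approx} essential. A closely related technical nuisance is the rigorous justification of the infinite Taylor expansion, together with the associated interchanges of sum, conditional expectation, and $\nu$-integral. The exponential moment hypothesis~(\ref{eq:condition}) is tailored precisely for this: it makes the moments $\EE[\|X_t - X_0\|^k \mid X_0]$ tame on the natural scale $(e^{2t}-1)^{k/2}$ of Mehler's formula, ensures summability of the $k$-series in $S(t)$, and keeps $e^{-t}\sqrt{\EE[S(t)]}$ integrable near $t=0$ (where $e^{2t}-1\sim 2t$).
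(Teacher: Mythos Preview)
Your plan contains many of the correct ingredients (the Taylor expansion of $(\mathcal{L}_\nu)_t$, the Mehler/Hermite representation of $\partial^\alpha P_t$, the weighted Cauchy--Schwarz with weights adapted to the factors $(e^{2t}-1)^{k-1}$), but the overall framework is the wrong one for $W_2$, and Section~\ref{sec:W2approx} does \emph{not} supply what you claim it does.

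Running your argument with a generic $1$-Lipschitz test function $f$ gives, via the Parseval identity for Hermite polynomials,
\[
\sum_{\multiindex}\frac{(e^{2t}-1)^{|\multiindex|}}{\multiindex!}\,\|\partial^{\multiindex}\nabla P_t f(x)\|^2
= e^{-2t}\,P_t\|\nabla f\|^2(x)\le e^{-2t},
\]
so after Cauchy--Schwarz and Kantorovich duality you obtain
$W_1(\nu,\gamma)\le \int_0^\infty e^{-t}\EE[S(t)]^{1/2}\,dt$,
which is only a $W_1$ bound. There is no ``rich family of smooth $f$ whose supremum recovers $W_2$'' in any sense relevant here, and Section~\ref{sec:W2approx} contains no $W_1\to W_2$ upgrade: it is purely an approximation device (replacing $\nu$ by a measure with density $h=\epsilon+f$, $f\in\mathcal{C}^\infty_c$, and $\|X_t-X_0\|$ by a bounded process) so that the Fisher-information computations are justified.

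The paper's route to $W_2$ is genuinely different from what you wrote. It starts from the Otto--Villani-type inequality~(\ref{eq:OV}),
\[
W_2(\nu,\gamma)\le \int_0^\infty I(\nu_t)^{1/2}\,dt,\qquad I(\nu_t)=\int \frac{\|\nabla P_t h\|^2}{P_t h}\,d\gamma,
\]
and then bounds $I(\nu_t)$ directly (Proposition~\ref{pro:Ibound}). The test function is not generic: it is specifically $v_t=\log P_t h$, for which one has the exact identity $I(\nu_t)=-\int \mathcal{L}_\gamma P_t v_t\,d\nu$. After inserting $(\mathcal{L}_\nu)_t$ and Taylor-expanding, the same Parseval identity you had in mind yields on the derivative side
\[
\EE\!\left[\sum_{\multiindex}\frac{(e^{2t}-1)^{|\multiindex|}}{\multiindex!}\|\partial^{\multiindex}\nabla P_t v_t(X_0)\|^2\right]
= e^{-2t}\,\EE\!\left[P_t\|\nabla v_t\|^2(X_0)\right]
= e^{-2t}\,I(\nu_t),
\]
using self-adjointness of $P_t$ with respect to $\gamma$. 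Thus Cauchy--Schwarz gives the self-referencing inequality
$I(\nu_t)\le e^{-t}\EE[S(t)]^{1/2}\,I(\nu_t)^{1/2}$,
hence $I(\nu_t)^{1/2}\le e^{-t}\EE[S(t)]^{1/2}$. It is precisely this choice of $v_t$ and the resulting bootstrap that deliver a $W_2$ bound rather than a $W_1$ bound; your scheme with arbitrary Lipschitz $f$ cannot reproduce it.
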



Let $\nu$ be a measure on $\RR^d$ and let $(X_t)_{t \geq 0}$ be a stochastic process such that $X_t$ is drawn from $\nu$ for any $t \geq 0$. Let us assume the measure $\nu$ admits a density $h$ with respect to $\gamma$ such that $h = \epsilon + f$ for some constant $\epsilon > 0$ and $f \in \mathcal{C}^\infty_c$ and suppose the stochastic process $\|X_t - X_0\|$ is bounded for any $t > 0$. Let us note that, while such assumptions imply a Stein kernel exists, approximation arguments developed in Section~\ref{sec:W2approx} allow us to lift them in favor of the weaker (\ref{eq:condition}).

For $t > 0$, let $\nu_t$ be the measure with density $P_t h$. Since $\gamma$ is the reversible measure of $P_t$, $\nu_t$ converges to $\gamma$ when $t$ grows to infinity. One can thus bound $W_2(\nu,\gamma)$ by controlling $W_2(\nu, \nu_t)$ for any $t > 0$ and letting $t$ grow. To this end, we use the following inequality, obtained in Lemma 2 \citep{OV},
\[
\frac{d^+}{dt} W_{2}(\nu, \nu_t) \leq \left(\int_{\RR^d} \frac{\|\nabla P_t h(x)\|^2}{P_t h(x)} d \gamma(x) \right)^{1/2} := I(\nu_t)^{1/2}
\]
which yields
\beq
\label{eq:OV}
W_2(\nu, \gamma) \leq \int_0^\infty I(\nu_t)^{1/2} dt.
\eeq
The quantity $I(\nu_t)$ is the Fisher information of the measure $\nu_t$ with respect to $\gamma$. In Proposition 2.4 \citep{Stein}, this quantity is bounded using Stein kernels. In this work, we bound $I(\nu_t)$ using the stochastic process $(X_t)_{t \geq 0}$.
\begin{proposition}
\label{pro:Ibound}
Under the above setting, we have 
\[
I(\nu_t) \leq e^{-2t} \EE[S(t)],
\]
where $S(t)$ is defined in Theorem~\ref{thm:mainGauss}. 
\end{proposition}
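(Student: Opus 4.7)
My plan is to bound $I(\nu_t)$ through $L^2(\nu_t)$-duality: writing $I(\nu_t) = \|\nabla \log P_t h\|^2_{L^2(\nu_t)}$, it suffices to prove that for every smooth test function $\phi$,
\[
\int \nabla \phi \cdot \nabla \log P_t h \, d\nu_t \leq e^{-t} \sqrt{\EE[S(t)]} \cdot \|\nabla \phi\|_{L^2(\nu_t)},
\]
which upon taking $\nabla \phi = \nabla \log P_t h$ yields the proposition. Gaussian integration by parts and self-adjointness of $\mathcal{L}_\gamma$ under $\gamma$ recast the left-hand side as $-\int \mathcal{L}_\gamma P_t \phi \, d\nu$, and invariance of $\nu$ under $\mathcal{L}_\nu$ lets me subtract $\int \mathcal{L}_\nu P_t \phi \, d\nu = 0$ to obtain the working identity $\int (\mathcal{L}_\nu - \mathcal{L}_\gamma) P_t \phi \, d\nu$.

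I would then Taylor-expand $\mathcal{L}_\nu P_t \phi(x) = s^{-1}\EE[P_t \phi(X_t) - P_t \phi(X_0) \mid X_0 = x]$, legitimate because $\|X_t - X_0\|$ is bounded and $P_t \phi$ is real-analytic. Collecting the drift and Laplacian terms of $\mathcal{L}_\gamma$ with the $k = 1, 2$ Taylor coefficients yields
\[
(\mathcal{L}_\nu - \mathcal{L}_\gamma) P_t \phi (x) = \sum_{k \geq 1} \langle A_k(x), \nabla^k P_t \phi(x) \rangle,
\]
where $A_1 = \EE[(X_t - X_0)/s \mid X_0] + X_0$, $A_2 = \EE[(X_t-X_0)^{\otimes 2}/(2s) \mid X_0] - I_d$, and $A_k = \EE[(X_t-X_0)^{\otimes k} \mid X_0]/(s k!)$ for $k \geq 3$, so that $\|A_k\|^2$ recovers the $k$-th summand of $S(t)$ up to the factor $(k-1)!/(e^{2t}-1)^{k-1}$. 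The heart of the argument is to convert each $\nabla^k P_t \phi$ into an expression involving only $\nabla \phi$: using the Mehler representation $P_t \phi(x) = \EE_G[\phi(Y)]$ with $Y = e^{-t}x + \sqrt{1-e^{-2t}} G$ and $G \sim \gamma$, repeated Gaussian integration by parts on $G$ gives
\[
(\nabla^k P_t \phi(x))_\alpha = \frac{e^{-kt}}{(1-e^{-2t})^{(k-1)/2}} \, \EE_G[\partial_i \phi(Y) H_{\alpha - e_i}(G)]
\]
for any $i$ with $\alpha_i \geq 1$, where $H_\beta$ denotes the probabilist's multivariate Hermite polynomial. Symmetrizing over $i$ and summing over $k$ consolidates $\sum_k \langle A_k, \nabla^k P_t \phi\rangle$ into a single inner product $\EE_G[\nabla \phi(Y) \cdot \mathbf{M}(G;x)]$, so that one application of Cauchy--Schwarz (combined with $\int P_t(\|\nabla \phi\|^2)\,d\nu = \|\nabla \phi\|^2_{L^2(\nu_t)}$) extracts the desired $\|\nabla \phi\|_{L^2(\nu_t)}$ factor.

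It remains to evaluate $\EE_G\|\mathbf{M}(G;x)\|^2$. Here the Hermite orthogonality $\EE[H_\beta H_{\beta'}] = \delta_{\beta\beta'}\beta!$ annihilates all cross-$k$ contributions, and the combinatorial identity $\sum_{i:\alpha_i\geq 1} \alpha_i/\alpha! = k/\alpha!$ for $|\alpha|=k$ collapses each diagonal term to $\|A_k(x)\|^2/(k-1)!$; reassembling the prefactors via $1-e^{-2t} = e^{-2t}(e^{2t}-1)$ produces exactly
\[
\EE_G\|\mathbf{M}(G;x)\|^2 = e^{-2t} \sum_{k \geq 1} \frac{(k-1)!}{(e^{2t}-1)^{k-1}} \|A_k(x)\|^2,
\]
which is $e^{-2t}$ times the value of $S(t)$ at $X_0 = x$. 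Integrating over $x \sim \nu$ closes the bound. I expect the main difficulty to lie in the multi-index bookkeeping for the Mehler--Hermite identity and the combinatorial simplification, together with justifying the interchange of Taylor expansion, expectation, and Cauchy--Schwarz under the strong smoothness and boundedness hypotheses in force here (the general case being handled by the approximation arguments of Section~\ref{sec:W2approx}).
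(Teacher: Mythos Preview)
Your proposal is correct and is essentially the same argument as the paper's, just organized through $L^2(\nu_t)$-duality with a generic test function $\phi$ rather than plugging in $\phi=v_t=\log P_t h$ from the outset. The paper's Cauchy--Schwarz step together with the Hermite--Parseval identity~(\ref{eq:sommeipp}) and the rearrangement~(\ref{eq:rearrange}) are exactly your Mehler--Hermite representation, orthogonality computation, and combinatorial identity, and the paper's recognition that the second Cauchy--Schwarz factor equals $I(\nu_t)^{1/2}$ corresponds to your final specialization $\phi=v_t$.
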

As injecting this bound in (\ref{eq:OV})  and using the approximation arguments of Section~\ref{sec:W2approx} concludes the proof of Theorem~\ref{thm:mainGauss}, the remainder of this Section is dedicated to the proof of this Proposition. 

Let $t > 0$ and let $v_t \coloneqq \log P_t h$. By Equation~(2.12) \citep{Stein}, we have 
\[
I(\nu_t) = -\int_{\RR^d}  \mathcal{L}_\gamma P_t v_t d \nu.
\]
Hence, if an operator $\mathcal{L}_\nu$ verifies
\[
\int_{\RR^d} \mathcal{L}_\nu P_t v_t d\nu = 0,
\]
then 
\beq
\label{eq:auxmain}
I(\nu_t) = \int_{E}  (\mathcal{L}_\nu - \mathcal{L}_\gamma) P_t v_t d \nu.
\eeq
Now, let $\step > 0$ and let $\mathcal{L}_\nu$ be the operator such that, for any $\test \in L^1(\nu)$ and any $x \in \RR^d$, 
\[
\mathcal{L}_\nu \test (x) \coloneqq \frac{1}{\step} \EE\left[ \test(X_t) - \test(X_0)  |X_0 = x\right].
\]
Since $X_t$ and $X_0$ are drawn from the same law, integrating this operator with respect to $\nu$ gives 
\[
\int_{\RR^d} \mathcal{L}_\nu \test (x) d\nu(x) = \frac{1}{\step} \EE[\test(X_t) - \test(X_0)] = 0.  
\]
Let us rewrite $\mathcal{L}_\nu$ using a Taylor expansion. 
\begin{lemma}
\label{lem:analytic}
Let $\test$ be a bounded and measurable function and let $t > 0$ and $\multiindex$ be a multi-index. Under the above setting, we have that 
\[ \EE\left[(X_t-X_0)^\multiindex \mid X_0], \partial^\multiindex P_t \test(X_0) \right]
\]
exists and that 
\[
\EE[\test(X_t) - \test(X_0)] = \sum_{|\multiindex| > 0} \EE\left[ \frac{\EE[(X_t-X_0)^{\multiindex} \mid X_0] \partial^\multiindex P_t \test(X_0)}{\multiindex !}  \right]. 
\]
\end{lemma}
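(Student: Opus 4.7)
The plan is to exploit the smoothing property of the Ornstein--Uhlenbeck semigroup: although $\test$ is only bounded and measurable, $P_t \test$ is real-analytic on $\RR^\Dim$, so its Taylor series around $X_0$ converges everywhere, and when paired with the boundedness hypothesis on $\|X_t - X_0\|$ it yields a legitimate series representation with a well-controlled tail.

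First I would use Mehler's formula $P_t \test(x) = \int \test(e^{-t} x + \sqrt{1-e^{-2t}}\, z)\, \gamma(dz)$ together with repeated integration by parts against the Gaussian weight to obtain $\partial^\multiindex P_t \test(x) = \frac{e^{-|\multiindex| t}}{(1-e^{-2t})^{|\multiindex|/2}} \int_{\RR^\Dim} \test(e^{-t} x + \sqrt{1-e^{-2t}}\, z)\, H_\multiindex(z)\, \gamma(dz)$, where $H_\multiindex$ is the probabilist's Hermite polynomial of multi-index $\multiindex$. Since $\test$ is bounded and $\EE[|H_\multiindex(Z)|] \leq \sqrt{\multiindex!} \leq \sqrt{|\multiindex|!}$, this gives the factorially controlled estimate $|\partial^\multiindex P_t \test(x)| \leq \|\test\|_\infty\, e^{-|\multiindex| t} \sqrt{|\multiindex|!}\, (1-e^{-2t})^{-|\multiindex|/2}$.

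Combining this with the multinomial identity $\sum_{|\multiindex|=k} \frac{|x^\multiindex|}{\multiindex!} \leq \frac{(\sqrt{\Dim}\,\|x\|)^k}{k!}$ and the standing hypothesis that $\|X_t-X_0\|$ is bounded, the series $\sum_\multiindex \frac{|(X_t-X_0)^\multiindex \partial^\multiindex P_t \test(X_0)|}{\multiindex!}$ is dominated by a convergent deterministic series of the form $\|\test\|_\infty \sum_k c^k/\sqrt{k!}$. This justifies the pointwise Taylor expansion $P_t\test(X_t) - P_t\test(X_0) = \sum_{|\multiindex|>0} \frac{(X_t-X_0)^\multiindex \partial^\multiindex P_t \test(X_0)}{\multiindex!}$ and, via two applications of Fubini (first inside $\EE[\cdot\mid X_0]$ and then in the outer expectation), yields $\EE[P_t\test(X_t)-P_t\test(X_0)] = \sum_{|\multiindex|>0} \frac{\EE[\EE[(X_t-X_0)^\multiindex\mid X_0]\,\partial^\multiindex P_t \test(X_0)]}{\multiindex!}$. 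Since $X_t$ and $X_0$ both have law $\nu$, the left-hand side equals zero, which in turn equals $\EE[\test(X_t)-\test(X_0)]$ for the same marginal reason, delivering the identity claimed in the lemma.

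The main obstacle is the bookkeeping to verify that the Hermite-based derivative bound composes cleanly with the multinomial estimate into a summable dominator; this is the step that crucially relies on the temporary boundedness assumption on $\|X_t-X_0\|$. The relaxation to the weaker hypothesis~\eqref{eq:condition} of Theorem~\ref{thm:mainGauss} is only addressed later via the approximation arguments in Section~\ref{sec:W2approx}.
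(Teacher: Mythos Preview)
Your proposal is correct and follows essentially the same approach as the paper: both use the Hermite integration-by-parts representation of $\partial^\multiindex P_t\test$ to get the uniform bound $|\partial^\multiindex P_t\test| \leq C\sqrt{\multiindex!}\,(e^{2t}-1)^{-|\multiindex|/2}$, combine it with the boundedness of $\|X_t-X_0\|$ to produce a summable dominator of the form $\sum_k c^k/\sqrt{k!}$, and then pass from the Taylor expansion of $P_t\test$ to the claimed series identity. The only cosmetic differences are that the paper writes the remainder explicitly via Taylor's theorem at an intermediate point $\xi$ before letting the order go to infinity, and that you spell out the passage $\EE[P_t\test(X_t)-P_t\test(X_0)]=0=\EE[\test(X_t)-\test(X_0)]$ which the paper leaves implicit.
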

We delay the proof of this result to Section~\ref{sec:analproof}. Let $k > 0$ be an integer, after rearranging terms, we have 
\begin{multline}
\label{eq:rearrange}
\sum_{|\multiindex| = k} \frac{\EE[(X_t-X_0)^{\multiindex} \mid X_0] \partial^\multiindex P_t \test(X_0)}{\multiindex !} = \\
\sum_{|\multiindex| = k - 1} \frac{\EE[(X_t - X_0) (X_t-X_0)^{\multiindex} \mid X_0] \cdot \partial^\multiindex \nabla P_t \test(X_0)}{(|\multiindex|+1) \multiindex !}.
\end{multline}
Thus, 
\begin{multline*}
\int_{\RR^d} \mathcal{L}_\nu P_t \test (x) d\nu(x) = \frac{1}{s} \EE\left[\EE[X_t - X_0 \mid X_0] \cdot \nabla P_t v_t (X_0)\right] \\
+ \frac{1}{2s}\EE\left[\left \langle \EE[(X_t - X_0)^{\otimes 2} \mid X_0] , \hess P_t v_t (X_0) \right \rangle \right] \\ 
+ \sum_{|\multiindex| > 1} \frac{\EE\left[\EE[(X_t - X_0) (X_t-X_0)^{\multiindex} \mid X_0] \cdot \partial^\multiindex \nabla P_t \test(X_0) \right]}{s (|\multiindex|+1) \multiindex !}.
\end{multline*}
Then, by (\ref{eq:auxmain}), 
\begin{multline}
\label{eq:Ibound}
I(\nu_t) = \EE\left[ \left(\frac{\EE[X_t - X_0 \mid X_0]}{s} + X_0 \right) \cdot \nabla P_t v_t (X_0) \right] \\
+ \EE \left[\left \langle \frac{\EE[(X_t - X_0)^{\otimes 2} \mid X_0]}{2s} - I_d, \hess P_t v_t (X_0) \right \rangle \right] \\ 
 + \sum_{|\multiindex| > 1}  \frac{\EE\left[\EE[(X_t - X_0) (X_t-X_0)^\multiindex  \mid X_0] \cdot \partial^\multiindex \nabla P_t v_t (X_0) \right]}{(|\multiindex|+1) \multiindex ! s}.
\end{multline}
Let $\test$ be a bounded and measurable function. By Equation (2.7.3) \citep{Markov}, 
\beq
\label{eq:OU}
P_t \test (x) = \int_{\RR^d} \test(xe^{-t} + \sqrt{1-e^{-2t}}y) d\gamma(y).
\eeq
In particular if $\test$ is a function such that $\|\nabla \test\|$ is bounded, we have $\nabla P_t \test = e^{-t} P_t \nabla \test$. 
For any multi-index $\multiindex$, let $H_\multiindex$ be the multivariate Hermite polynomial of index $\multiindex$, defined for any $x \in \RR^d$ by 
\[
H_\multiindex(x) \coloneqq (-1)^k e^{\frac{\|x\|^2}{2}} \partial^\multiindex e^{-\frac{\|x\|^2}{2}}.
\]
Let $\test \in \mathcal{C}^\infty$ be a bounded function. For any multi-index $\multiindex$, starting with (\ref{eq:OU}) and integrating $|\multiindex|$ times with respect to the Gaussian measure, we obtain 
\beq
\label{eq:ippgauss}
\partial^\multiindex P_t \test(x) = \frac{1}{(e^{2t}-1)^{|\multiindex|/2}}  \int_{\RR^d} H_\multiindex (y) \test(xe^{-t} + \sqrt{1-e^{-2t}}y) d\gamma(y).
\eeq
Since Hermite polynomials form an orthogonal basis of $L^2(\gamma)$ with norms
\[
\forall \multiindex, \|H_\multiindex\|^2_{\gamma} \coloneqq \int_{\RR^d} H_\multiindex^2 (y) d\gamma(y) = \multiindex !,
\]
applying (\ref{eq:ippgauss}) to the vector field $\nabla v_t$ yields, for any $x \in \RR^d$ and any multi-index $\multiindex$,
\begin{align*}
\frac{ (e^{2t} - 1)^{|\multiindex|}}{e^{-2t} \multiindex !} \|\partial^\multiindex \nabla P_t v_t (x)\|^2
& = \frac{ (e^{2t} - 1)^{|\multiindex|}}{ \multiindex !} \|\partial^\multiindex P_t \nabla v_t (x)\|^2\\ 
& = \left \| \int_{\RR^d} \frac{H_\multiindex(y)}{\|H_\multiindex\|_{\gamma}}  \nabla v_t (xe^{-t} + \sqrt{1-e^{-2t}}y)  d\gamma(y) \right \|^2 \\
\end{align*}
Therefore, 
\beq
\label{eq:sommeipp}
\sum_{\multiindex} \frac{(e^{2t} - 1)^{|\multiindex|}}{e^{-2t} \multiindex!}   \left\|\partial^\multiindex \nabla  P_t v_t (x)\right\|^2  = P_t \|\nabla v_t(x)\|^2 .
\eeq
Now, let 
\begin{align*}
S(t) \coloneqq &  \left\|\EE\left[\frac{X_t-X_0}{s} + X_0\mid X_0\right] \right\|^2  \\
& + \frac{1}{e^{2t} - 1} \left\|\EE\left[\frac{(X_t-X_0)^{\otimes 2} }{2s} - I_d\mid X_0\right] \right\|^2 \\
& + \sum_{|\multiindex| > 1} \frac{\|\EE[(X_t-X_0) (X_t-X_0)^\multiindex \mid X_0]\|^2}{ (s (|\multiindex|+1))^2 \multiindex ! (e^{2t} - 1)^{|\multiindex|} } .
\end{align*}
Applying Cauchy-Schwarz inequality on (\ref{eq:Ibound}) and using (\ref{eq:sommeipp}), we obtain 
\[
I(\nu_t) \leq e^{-t} \EE[S(t)]^{1/2} \EE[P_t \|\nabla v_t(X_0)\|^2]^{1/2}.
\]
Then, since $v_t = \log(P_t h)$, 
\begin{align*}
I(\nu_t) & \leq e^{-t}\EE[S(t)]^{1/2}\left(\int_{\RR^d} \frac{\|\nabla P_t h\|^2}{(P_t h)^2}  \, d\nu_t\right)^{1/2} \\
& \leq e^{-t}\EE[S(t)]^{1/2}\left(\int_{\RR^d} \frac{\|\nabla P_t h\|^2}{P_t h}  \, d\gamma\right)^{1/2} \\
& \leq e^{-t}\EE[S(t)]^{1/2}I(\nu_t)^{1/2}.
\end{align*}
Finally, since $I(\nu_t)$ is finite,
\[
I(\nu_t)^{1/2} \leq e^{-t}\EE[S(t)]^{1/2},
\]
and rearranging terms in $S(t)$ using (\ref{eq:rearrange}) 
concludes the proof of Proposition~\ref{pro:Ibound}. 
\section{Gaussian measure and Wasserstein distances of any order}
\label{sec:Wp}
Let $p \geq 1$ and let $\nu$ be a measure on $\RR^d$. 
Let us assume the measure $\nu$ admits a density $h$ with respect to $\gamma$ such that $h = \epsilon +f$ with $\epsilon > 0$ and $f \in \mathcal{C}^\infty_c$. 

In order to bound the $W_p$ distance between $\nu$ and the $d$-dimensional Gaussian measure $\gamma$, it is possible to use Stein kernels to obtain a version of the score function $\nabla v_t \coloneqq \nabla \log P_t h$ \citep{Stein}. Indeed, by Section 3 \citep{WangOV}, this score function can be used to bound the Wasserstein distances between $\nu$ and $\gamma$ as 
\[
\frac{d+}{dt} W_p(\nu, \nu_t) \leq \left( \int_{\RR^d} \|  \nabla v_t \|^p d\nu_t \right)^{1/p},
\]
leading to
\beq
\label{eq:OVp}
W_p(\nu,\gamma) \leq \int_0^\infty \left( \int_{\RR^d} \|  \nabla v_t \|^p d\nu_t \right)^{1/p} dt.
\eeq
Let us provide a version of $v_t$. Let $Z$ be a Gaussian random variable, $X_0$ be a random variable drawn form $\nu$ and let $F_t \coloneqq e^{-t} X_0 + \sqrt{1-e^{-2t}} Z$.
\begin{lemma}
\label{lem:score}
Let $t > 0$. Then, under the above notations,
\[
\rho_t \coloneqq \EE\left[e^{-t}X_0 - \frac{e^{-2t}}{\sqrt{1-e^{-2t}}} Z\mid F_t\right]
\] 
is a version of $\nabla v_t (F_t)$. 
\end{lemma}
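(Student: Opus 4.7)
The plan is to first identify $F_t$ as the random variable whose law is $\nu_t$, then derive $\nabla v_t(F_t)$ via a Tweedie-type conditional-expectation identity, and finally rewrite the resulting expression in terms of $X_0$ and $Z$.

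For the first step, I will use the Mehler representation (\ref{eq:OU}) of $P_t$ together with the self-adjointness of the Ornstein-Uhlenbeck semigroup with respect to $\gamma$ to show that, for any bounded measurable $\phi$,
\[
\EE[\phi(F_t)] = \EE[P_t\phi(X_0)] = \int P_t\phi \, h \, d\gamma = \int \phi \, P_t h \, d\gamma,
\]
so $F_t$ has density $P_t h$ with respect to $\gamma$. Writing $p_t$ for its Lebesgue density, one has $p_t(y) = P_t h(y) (2\pi)^{-d/2} e^{-\|y\|^2/2}$ and in particular $\nabla v_t(y) = \nabla \log p_t(y) + y$.

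Next, I will compute $\nabla \log p_t$ by conditioning on $X_0$. Conditionally on $X_0$, one has $F_t \sim \mathcal{N}(e^{-t}X_0, (1-e^{-2t}) I_d)$, whose Lebesgue score at $y$ is $-(y - e^{-t}X_0)/(1-e^{-2t})$. Writing $p_t(y)$ as an integral against the law of $X_0$ and differentiating under the integral sign yields the Tweedie-type identity
\[
\nabla \log p_t(y) = \frac{e^{-t}\EE[X_0 \mid F_t = y] - y}{1-e^{-2t}},
\]
and hence
\[
\nabla v_t(y) = \frac{e^{-t}\EE[X_0 \mid F_t = y] - e^{-2t}y}{1-e^{-2t}}.
\]

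The final step is to trade the explicit $y$ for a conditional expectation of $Z$. Since $F_t = e^{-t}X_0 + \sqrt{1-e^{-2t}}\,Z$, conditioning on $F_t = y$ gives $\sqrt{1-e^{-2t}}\,\EE[Z \mid F_t = y] = y - e^{-t}\EE[X_0 \mid F_t = y]$. Substituting this into the expression above and collecting terms reduces $\nabla v_t(y)$ to $\EE[e^{-t}X_0 - \tfrac{e^{-2t}}{\sqrt{1-e^{-2t}}}Z \mid F_t = y]$; evaluation at $y = F_t$ then yields $\rho_t = \nabla v_t(F_t)$ almost surely. The only genuine obstacle is justifying differentiation under the integral in the second step, but this is routine under the standing decomposition $h = \epsilon + f$ with $f \in \mathcal{C}^\infty_c$, since both the conditional Gaussian density and its gradient admit locally uniform integrable dominating bounds against $h\,d\gamma$.
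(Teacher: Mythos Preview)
Your proof is correct but proceeds along a different line from the paper's. You compute $\nabla v_t$ directly: after identifying the Lebesgue density $p_t$ of $F_t$, you use the conditional Gaussian law of $F_t$ given $X_0$ together with differentiation under the integral to obtain the Tweedie-type identity $\nabla\log p_t(y) = \EE[-(y-e^{-t}X_0)/(1-e^{-2t})\mid F_t=y]$, and then substitute the linear relation between $F_t$, $X_0$ and $Z$ to reach $\rho_t$. The paper instead works weakly: it first shows, via integration by parts with respect to $\gamma$, that $\nabla v_t$ is characterized by the identity $\int\nabla\phi\,d\nu_t=\int\phi(x)(x-\nabla v_t(x))\,d\nu_t(x)$ for all $\phi\in\mathcal{C}^\infty_c$, and then verifies that $\rho_t$ satisfies the same identity by a single application of Stein's lemma in $Z$, namely $\EE[\phi(F_t)(F_t-\rho_t)]=\EE[\phi(F_t)Z]/\sqrt{1-e^{-2t}}=\EE[\nabla\phi(F_t)]$. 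Your route is more explicit and perhaps more familiar from the score-estimation literature, at the cost of the differentiation-under-the-integral step you flag (which is indeed routine under $h=\epsilon+f$); the paper's route avoids that justification entirely by staying at the level of tested integrals, and foregrounds the Gaussian integration-by-parts mechanism that will be reused later in the paper.
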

\begin{proof}
Let $t > 0$.
Integrating by parts with respect to $\gamma$, we have, for any $\test \in \mathcal{C}^\infty_c$, 
\begin{align*}
\int_{\RR^d} \nabla \test (x) d \nu_t (x) &= \int_{\RR^d} \nabla \test (x) P_t h (x) d \gamma (x)  \\
 & = \int_{\RR^d} \nabla  (\test P_t h) (x) - \test (x) \nabla P_t h (x) d \gamma (x) \\
 & = \int_{\RR^d} x  \test (x) P_t h (x) - \test (x) \nabla P_t h (x) d\gamma (x) \\
 & = \int_{\RR^d} P_t h(x)  \test (x) \left(x - \frac{\nabla P_t h (x)}{P_t h(x)}\right) d\gamma (x) \\
 & = \int_{\RR^d}  \test (x) \left(x - \frac{\nabla P_t h (x)}{P_t h(x)}\right) d\nu_t (x).
\end{align*}
Thus, 
\beq
\label{eq:scorecarac}
\forall \test \in \mathcal{C}^\infty_c, \int_{\RR^d} \nabla \test (x) d \nu_t (x) = \int_{\RR^d} \test (x) ( x - \nabla v_t (x)) d \nu_t (x).
\eeq
In fact, this property completely characterizes $\nabla v_t$: if another vector field $\xi : \RR^d\rightarrow \RR^d$ satisfies
\[
\forall \test \in \mathcal{C}^\infty_c,  \int_{\RR^d} \nabla \test (x) d \nu_t (x) =  \int_{\RR^d} \test (x) ( x - \xi (x)) d \nu_t (x),
\]
then  
\[
\forall \test \in \mathcal{C}^\infty_c,  \int_{\RR^d} \test (x) (\nabla v_t (x) - \xi(x)) d\nu_t = 0,
\]
implying that $\xi = \nabla v_t$ almost everywhere with respect to the measure $\nu_t$.  

Now, let $\test \in \mathcal{C}^\infty_c$. Integrating by parts with respect to the Gaussian measure, we have 
\begin{align*}
\EE[\test(F_t) (F_t - \rho_t)] & = \EE\left[\test(F_t) \left(F_t - e^{-t}X_0 + \frac{e^{-2t}}{\sqrt{1-e^{-2t}}} Z\right)\right] \\
& = \EE\left[\frac{1}{\sqrt{1-e^{-2t}}}\test(F_t) Z\right] \\
& = \EE[\nabla \test(F_t)],
\end{align*}
implying that $\rho_t$ it is a version of $\nabla v_t$. 
\end{proof}

Bounding $W_p(\nu,\gamma)$ can thus be achieved by estimating $\EE[\|\rho_t\|^p]$, where $\rho_t$ is defined in Lemma~\ref{lem:score}. 
To this end, suppose there exists a quantity $\tau_t$ such that $\EE[\tau_t \mid F_t] = 0$ almost surely. Then, 
\begin{align*}
\EE[\|\rho_t\|^p] & = \EE\left[\left\|\EE\left[e^{-t}X_0 - \frac{e^{-2t}}{\sqrt{1-e^{-2t}}} Z\mid F_t \right]\right\|^p\right] \\
& = \EE\left[\left\|\EE\left[\tau_t + e^{-t}X_0 - \frac{e^{-2t}}{\sqrt{1-e^{-2t}}} Z\mid F_t \right]\right\|^p\right]
\end{align*}
and, by Jensen's inequality, 
\beq
\label{eq:taut}
\EE[\|\rho_t\|^p]  \leq \EE\left[\left\|\tau_t + e^{-t}X_0 - \frac{e^{-2t}}{\sqrt{1-e^{-2t}}} Z\right\|^p\right].
\eeq
Therefore, if such a quantity $\tau_t$ is close to $e^{-t}X_0 - \frac{e^{-2t}}{\sqrt{1-e^{-2t}}} Z$ then $\EE[\|\rho_t\|^p]$ is small and, by (\ref{eq:OVp}), so is $W_p(\nu, \gamma)$. Before showing how to compute such quantities in the following Sections, let us state the following result, proved in Section~\ref{sec:hypercon}.
\begin{lemma}
\label{lem:hypercon}
Let $Z$ be a normal random variable and let $(M_\multiindex)_{\multiindex \in \NN^d} \in \RR^d$. Then, 
\[
\EE[\|\sum_{\multiindex} M_{\multiindex} H_\multiindex(Z) \|^p]^{2/p} \leq \sum_{\multiindex} \max(1, p-1)^{|\multiindex|} \multiindex ! \|M_{\multiindex}\|^2.
\]
\end{lemma}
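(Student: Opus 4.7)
The plan is to recognize the inequality as the vector-valued form of Nelson's hypercontractivity for the Ornstein-Uhlenbeck semigroup applied to the Hermite series $F(Z) := \sum_\multiindex M_\multiindex H_\multiindex(Z)$, and to reduce it to a coordinate-wise scalar hypercontractive estimate by a Minkowski argument in $L^{p/2}$.

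I would first treat the range $p \geq 2$. Denoting by $F^i(Z) = \sum_\multiindex M_\multiindex^i H_\multiindex(Z)$ the $i$-th coordinate of $F$, one has $\|F\|^2 = \sum_{i=1}^d (F^i)^2$. Since $p/2 \geq 1$, Minkowski's inequality for the $L^{p/2}$ norm gives
\[
\EE[\|F\|^p]^{2/p} = \bigl\|\textstyle\sum_i (F^i)^2\bigr\|_{L^{p/2}} \leq \sum_i \|(F^i)^2\|_{L^{p/2}} = \sum_i \EE[|F^i|^p]^{2/p},
\]
which reduces the problem to the scalar estimate $\EE[|F^i|^p]^{2/p} \leq \sum_\multiindex (M_\multiindex^i)^2 (p-1)^{|\multiindex|} \multiindex!$.

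Next, I would invoke Nelson's hypercontractive inequality: the Ornstein-Uhlenbeck semigroup satisfies $\|P_t g\|_{L^p(\gamma)} \leq \|g\|_{L^2(\gamma)}$ whenever $e^{2t} \geq p - 1$, and on the Hermite basis $P_t H_\multiindex = e^{-t|\multiindex|} H_\multiindex$. Choosing $t$ with $e^{2t} = p - 1$ and applying the inequality to $\tilde g := \sum_\multiindex c_\multiindex e^{t|\multiindex|} H_\multiindex$, so that $P_t \tilde g = \sum_\multiindex c_\multiindex H_\multiindex$, then using the $L^2$-orthogonality $\EE[H_\multiindex H_\beta] = \multiindex!\,\delta_{\multiindex\beta}$, yields
\[
\bigl\|\textstyle\sum_\multiindex c_\multiindex H_\multiindex\bigr\|_{L^p}^2 \leq \sum_\multiindex c_\multiindex^2 (p-1)^{|\multiindex|} \multiindex!.
\]
Applying this with $c_\multiindex = M_\multiindex^i$ and summing over $i$, using $\sum_i (M_\multiindex^i)^2 = \|M_\multiindex\|^2$, delivers the claim for $p \geq 2$.

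For the remaining range $p \in [1,2)$ we have $\max(1,p-1) = 1$, and I would simply apply Jensen's inequality $\EE[\|F\|^p]^{2/p} \leq \EE[\|F\|^2]$ together with Hermite orthogonality, which gives $\EE[\|F\|^2] = \sum_\multiindex \multiindex! \|M_\multiindex\|^2$. I do not expect a genuine obstacle: both ingredients (Minkowski in $L^{p/2}$ and scalar Nelson hypercontractivity) are classical and used as black boxes. The only delicate point is that the vector norm decouples by Minkowski precisely when $p \geq 2$, so that $L^{p/2}$ is a normed space; the range $p < 2$ has to be handled separately, which is exactly why the factor $\max(1, p-1)$ rather than $p-1$ appears in the statement.
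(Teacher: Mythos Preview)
Your proof is correct and shares the same backbone as the paper's: for $p<2$ you both use Jensen plus Hermite orthogonality, and for $p\geq 2$ you both invoke Nelson's hypercontractivity for the Ornstein--Uhlenbeck semigroup together with the eigenvector relation $P_t H_\multiindex = e^{-t|\multiindex|}H_\multiindex$ at $e^{2t}=p-1$. The one genuine difference is in the vector-to-scalar reduction step. You decouple coordinates via Minkowski in $L^{p/2}$, applying scalar hypercontractivity to each $F^i$ and summing. The paper instead keeps the argument coordinate-free: it observes the pointwise bound $\|P_t\test\|\leq P_t\|\test\|$ (Jensen for the averaging operator $P_t$), so that $\EE[\|P_t\test\|^p]^{1/p}\leq \EE[(P_t\|\test\|)^p]^{1/p}\leq \EE[\|\test\|^2]^{1/2}$ follows directly from scalar hypercontractivity applied to the scalar function $\|\test\|$. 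Both reductions yield exactly the same final bound; yours is slightly more elementary, the paper's slightly more streamlined and dimension-independent in spirit.
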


\subsection{One-dimensional case}
\label{sec:WpGauss1}

In this Section, we bound the $W_p$ distance between $\nu$ and $\gamma$ in the case $d = 1$ and obtain the following result. 

\begin{theorem}
\label{thm:WpGauss}
Let $p \geq 1$ and 
let $\nu$ be a probability measure on $\RR$ with finite moment of order $p$. Let $(X_t)_{t \geq 0}$ be a stochastic process such that $X_t$ is drawn from $\nu$ for any $t >  0$. 
Suppose that
\beq
\label{eq:condition2}
\forall \epsilon >  0, \exists \xi, M >  0, \forall t \in [\epsilon, \epsilon^{-1}],  \EE\left[e^{\frac{ p (1 + \xi) \max(1, p-1) |X_t - X_0|^2}{2(e^{2t}-1)}}\right] \leq M. 
\eeq
Then, for any $s >  0$,
\[
W_p(\nu,\gamma) \leq \int_0^\infty e^{-t} \EE[S_p(t)^{p/2}]^{1/p} dt,
\]
where 
\begin{align*}
S_p(t) =  &  \EE\left[\frac{X_t-X_0}{s} +X_0 \mid X_0\right]^2 \\
& + \frac{\max(1, p-1)}{e^{2t} -1}  \EE\left[\frac{(X_t-X_0)^{2}}{2 s}  - 1 \mid X_0 \right]^2 \\
& +  \sum_{k>2} \frac{\max(1, p-1)^{k-1}}{s^2 k k! (e^{2t}-1)^{k-1} }  \EE[(X_t-X_0)^{k}\mid X_0]^2.
\end{align*}
\end{theorem}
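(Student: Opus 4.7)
The plan is to bound $\EE[|\rho_t|^p]^{1/p}$ for each $t > 0$ and then integrate using \eqref{eq:OVp}, where $\rho_t$ is the version of the score provided by Lemma~\ref{lem:score}. The strategy is to exhibit a specific $\tau_t$ with $\EE[\tau_t \mid F_t] = 0$ for which \eqref{eq:taut} matches the right-hand side of the theorem. Letting $E_1(X_0) := \EE[X_t - X_0 \mid X_0]/s + X_0$, $E_2(X_0) := \EE[(X_t - X_0)^2 \mid X_0]/(2s) - 1$, and $E_k(X_0) := \EE[(X_t - X_0)^k \mid X_0]/(s\, k!)$ for $k \geq 3$, I would set
\[
\Theta := \sum_{k \geq 1} \frac{E_k(X_0) H_{k-1}(Z)}{(e^{2t}-1)^{(k-1)/2}}, \qquad \tau_t := e^{-t}\bigl(\Theta - X_0 + Z/\sqrt{e^{2t}-1}\bigr),
\]
and then a direct computation (using the identity $e^{-t}/\sqrt{e^{2t}-1} = e^{-2t}/\sqrt{1-e^{-2t}}$) gives $\tau_t + e^{-t}X_0 - \tfrac{e^{-2t}}{\sqrt{1-e^{-2t}}}Z = e^{-t}\Theta$.

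The crux is the identity $\EE[\Theta - X_0 + Z/\sqrt{e^{2t}-1} \mid F_t] = 0$, which I plan to prove by testing both sides against $g(F_t)$ for smooth bounded $g$. For the comparison term, Gaussian integration by parts in $Z$ yields $\EE[(X_0 - Z/\sqrt{e^{2t}-1}) g(F_t)] = \EE[X_0 P_t g(X_0)] - \EE[(P_t g)'(X_0)]$. For $\EE[\Theta g(F_t)]$, invoking \eqref{eq:ippgauss} in the form $\EE[H_{k-1}(Z) g(F_t) \mid X_0] = (e^{2t}-1)^{(k-1)/2} (P_t g)^{(k-1)}(X_0)$ gives
\[
\EE[\Theta g(F_t)] = \sum_{k \geq 1} \EE\bigl[E_k(X_0) (P_t g)^{(k-1)}(X_0)\bigr].
\]
Peeling off $\EE[X_0 P_t g(X_0)]$ from the $E_1$ term and $-\EE[(P_t g)'(X_0)]$ from the $E_2$ term, the residual equals $s^{-1}\EE[\sum_{k \geq 1} (X_t - X_0)^k (P_t g)^{(k-1)}(X_0)/k!]$. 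Applying Lemma~\ref{lem:analytic} with any antiderivative $\Phi$ of $P_t g$ collapses this sum to $s^{-1}\EE[\Phi(X_t) - \Phi(X_0)]$, which vanishes by the exchangeability of $X_0$ and $X_t$.

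With the identity in hand, \eqref{eq:taut} yields $\EE[|\rho_t|^p]^{1/p} \leq e^{-t}\EE[|\Theta|^p]^{1/p}$. Applying Lemma~\ref{lem:hypercon} conditionally on $X_0$ to the Hermite expansion of $\Theta$, whose $H_{k-1}$ coefficient is $E_k(X_0)/(e^{2t}-1)^{(k-1)/2}$, bounds $\EE[|\Theta|^p \mid X_0]^{2/p}$ by $\sum_{k \geq 1} \max(1,p-1)^{k-1}(k-1)!\, E_k(X_0)^2/(e^{2t}-1)^{k-1}$. Using the arithmetic identity $(k-1)!/(k!)^2 = 1/(k\cdot k!)$ for $k \geq 3$, this weighted sum matches $S_p(t)$ term by term; taking $\EE$ over $X_0$, combining with \eqref{eq:OVp}, and removing the smoothness assumption $h = \epsilon + f$ via the approximation arguments of Section~\ref{sec:W2approx} (for which \eqref{eq:condition2} furnishes the required uniform integrability) completes the proof.

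The main obstacle I anticipate is the rigorous justification of the identity: the Hermite series defining $\Theta$ must converge in $L^p$, which is exactly what the exponential moment condition \eqref{eq:condition2} guarantees through Lemma~\ref{lem:hypercon}, and the termwise manipulation recognizing the residual sum as $\Phi(X_t) - \Phi(X_0)$ must be legitimized by Lemma~\ref{lem:analytic} to justify the interchange of summation and expectation.
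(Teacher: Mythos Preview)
Your proposal is correct and follows essentially the same route as the paper: your $\tau_t = e^{-t}(\Theta - X_0 + Z/\sqrt{e^{2t}-1})$ coincides with the $\tau_t$ of Lemma~\ref{lem:tau1}, the verification $\EE[\tau_t\mid F_t]=0$ via testing and collapsing the Hermite series through a primitive and Lemma~\ref{lem:analytic} is exactly the paper's argument, and the final bound through Lemma~\ref{lem:hypercon} is identical. One small wording issue: you write that the residual ``vanishes by the exchangeability of $X_0$ and $X_t$,'' but Theorem~\ref{thm:WpGauss} assumes only that each $X_t$ has law $\nu$, not that $(X_0,X_t)$ is exchangeable; it is this equidistribution (so $\EE[\Phi(X_t)]=\EE[\Phi(X_0)]$) that you actually need here.
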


Let $s > 0$ and let $(X_t)_{t \geq 0}$ be a stochastic process such that for any $t \geq 0$, $X_t$ is drawn from $\nu$ and $\|X_t - X_0\|$ is bounded. Again, thanks to approximation arguments developed in Section~\ref{sec:W2approx}, this assumption as well as the assumptions made on the smoothness of the measure $\nu$ can be lifted in favor of the more general (\ref{eq:condition2}). 
For now, let us start by using $(X_t)_{t \geq 0}$ to obtain a quantity $\tau_t$ such that $\EE[\tau_t \mid F_t] = 0$. 
\begin{lemma}
\label{lem:tau1}
Let $s,t > 0$. Letting
\[
\tau_t \coloneqq \sum_{k > 0} \frac{e^{-kt}}{s k! \sqrt{1-e^{-2t}}^{k-1}} \EE[(X_t-X_0)^k  \mid X_0] H_{k-1}(Z),
\] 
where $H_k$ is the one-dimensional $k$-th Hermite polynomial, we have 
\[
\EE[\tau_t \mid F_t] = 0. 
\]
\end{lemma}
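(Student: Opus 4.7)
The goal is to show that $\EE[\tau_t \phi(F_t)] = 0$ for every test function $\phi \in \mathcal{C}^\infty_c(\RR)$, which is the standard dual formulation of $\EE[\tau_t \mid F_t] = 0$. The two ingredients will be the Gaussian (Hermite) integration-by-parts identity $\EE[g(Z) H_n(Z)] = \EE[g^{(n)}(Z)]$, together with the Taylor expansion identity provided by Lemma~\ref{lem:analytic} applied to a primitive of $\phi$.

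First I fix $\phi \in \mathcal{C}^\infty_c$ and condition on $X_0$. Since the $k$-th summand of $\tau_t$ is the product of $H_{k-1}(Z)$ with a function of $X_0$ alone, and $F_t = e^{-t} X_0 + \sqrt{1-e^{-2t}} Z$, applying the Hermite integration by parts $k-1$ times to the Gaussian variable $Z$ yields
\[
\EE[\phi(F_t) H_{k-1}(Z) \mid X_0] = (1-e^{-2t})^{(k-1)/2}\, P_t \phi^{(k-1)}(X_0).
\]
The Ornstein--Uhlenbeck commutation relation $\partial_x^k P_t = e^{-kt} P_t \partial_x^k$ (already used in Section~\ref{sec:approach}) then rewrites this as $(1-e^{-2t})^{(k-1)/2} e^{(k-1)t}\, \partial^{k-1} P_t \phi(X_0)$. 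Multiplying by the prefactor of the $k$-th term of $\tau_t$, all the powers of $(1-e^{-2t})$ and $e^t$ collapse into a single factor $e^{-t}$, and I obtain
\[
\EE[\tau_t \phi(F_t)] = \frac{e^{-t}}{s}\sum_{k \geq 1} \frac{1}{k!}\, \EE\bigl[\EE[(X_t-X_0)^k \mid X_0]\, \partial^{k-1} P_t \phi(X_0)\bigr].
\]

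The next step is to recognise this sum. Let $\Phi(x) := \int_{-\infty}^x \phi(y)\, dy$, which is a bounded, smooth, measurable antiderivative of $\phi$ since $\phi \in \mathcal{C}^\infty_c$. The same OU commutation relation gives $\partial_x P_t \Phi = e^{-t} P_t \phi$, so $\partial^{k-1} P_t \phi = e^t \partial^k P_t \Phi$; the $e^{-t}$ prefactor then cancels and
\[
\EE[\tau_t \phi(F_t)] = \frac{1}{s}\sum_{k \geq 1} \frac{1}{k!}\, \EE\bigl[\EE[(X_t-X_0)^k \mid X_0]\, \partial^k P_t \Phi(X_0)\bigr] = \frac{1}{s}\,\EE[\Phi(X_t) - \Phi(X_0)]
\]
by Lemma~\ref{lem:analytic} applied to $\Phi$. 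Since $X_t$ and $X_0$ both have law $\nu$, the right-hand side vanishes, which is the desired conclusion.

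The main technical point will be justifying the two termwise interchanges between summation and expectation. In the working regime of this section $|X_t - X_0|$ is assumed bounded by some constant $B$, so the $k$-th coefficient of the series defining $\tau_t$ is bounded by $e^{-kt} B^k/(k!(1-e^{-2t})^{(k-1)/2})$, decaying factorially, and comparison with Mehler's generating identity $\sum_{j \geq 0}(a^j/j!)H_j(Z) = \exp(aZ-a^2/2)$ with $a = Be^{-t}/\sqrt{1-e^{-2t}}$ shows that $\tau_t$ lies in every $L^p$ and permits Fubini in both displays. I do not foresee any subtlety beyond this standard verification; the crux of the argument is the clean interplay between the Hermite integration-by-parts, the OU commutation relation, and Lemma~\ref{lem:analytic}, which together reduce the identity to the trivial equality $\EE[\Phi(X_t)] = \EE[\Phi(X_0)]$.
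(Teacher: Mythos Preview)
Your proof is correct and follows essentially the same route as the paper's: test against $\phi\in\mathcal{C}^\infty_c$, use Hermite integration by parts on $Z$ to turn each $H_{k-1}(Z)\phi(F_t)$ into $(k-1)$-th derivatives, and then recognise the resulting series as the Taylor expansion of $P_t\Phi$ from Lemma~\ref{lem:analytic}, which vanishes because $X_t\stackrel{d}{=}X_0$. The only cosmetic difference is that the paper writes the vanishing quantity as $\EE[P_t\Phi(X_t)-P_t\Phi(X_0)]$ while you write $\EE[\Phi(X_t)-\Phi(X_0)]$; both are zero for the same reason, so this does not affect the argument.
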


\begin{proof}
Let $\test \in \mathcal{C}^\infty_c$. For any $k \in \NN$, we denote by $\test^{(k)}$ the $k$-th derivative of $\test$. 
Let $k \in \NN$. Since $X_0$ and $Z$ are independent, applying (\ref{eq:ippgauss}) yields 
\[
\EE[H_k(Z) \test(F_t)] = \EE[H_k(Z) \test(e^{-t}X_0 + \sqrt{1 - e^{-2t}}Z)] = (1 - e^{-2t})^{k/2} \EE[\test^{(k)} (F_t)].
\]
Thus, 
\begin{align*}
\EE[\EE[\tau_t \mid F_t] \test(F_t)] & = \EE[\tau_t \test(F_t)]\\
& =  \frac{1}{s} \EE\left[\sum_{k > 0} \frac{e^{-kt}}{k!} (X_t-X_0)^k \test^{(k-1)}(F_t)\right].
\end{align*}
Now, let $\Phi$ be a primitive function of $\test$. By Lemma~\ref{lem:analytic}, the function $x \rightarrow \EE[\Phi(F_t)\mid X_0=x] = P_t \Phi (x)$ satisfies 
\begin{align*}
\EE[P_t \Phi(X_t) -P_t \Phi(X_0)]  & =\EE\left[\sum_{k > 0} \frac{e^{-kt}}{k!}  (X_t-X_0)^k (P_t \test)^{(k-1)}(X_0)\right]\\
& = \EE\left[\sum_{k > 0} \frac{e^{-kt}}{k!} (X_t-X_0)^k \test^{(k-1)}(F_t)\right] \\
& = s \EE[\EE[\tau_t \mid F_t] \test(F_t)].
\end{align*}
Then, since $X_t$ and $X_0$ are both drawn from $\nu$,
\[
\EE[\EE[\tau_t \mid F_t] \test(F_t)] = \frac{1}{s}\EE[P_t \Phi(X_t) -P_t \Phi(X_0)]  = 0,
\]
implying that $\EE[\tau_t \mid F_t] = 0$ almost surely. 
\end{proof}
Returning to the proof of Theorem~\ref{thm:WpGauss}, letting $s,t > 0$ and using Lemma~\ref{lem:tau1} along with 
Lemma~\ref{lem:score} and Jensen's inequality, we obtain 
\begin{align*}
\EE[|\rho_t|^p] & = \EE\left[\left|\EE\left[e^{-t} X_0 + \frac{e^{-2t}}{\sqrt{1-e^{-2t}}} Z + \tau_t \mid F_t\right]\right|^p\right] \\
& \leq \EE\left[\left|e^{-t} X_0 + \frac{e^{-2t}}{\sqrt{1-e^{-2t}}} Z + \tau_t\right|^p\right] \\
& \leq \EE_{X_0}\left[\EE_{Z}\left[\left|e^{-t} X_0 + \frac{e^{-2t}}{\sqrt{1-e^{-2t}}} Z + \tau_t\right|^p\right]\right].
\end{align*}
Then, by Lemma~\ref{lem:hypercon},
\[
\EE[|\rho_t|^p]^{1/p} \leq  \EE[S_p(t)^{p/2}]^{1/p},
\]
where
\begin{align*}
S_p(t) \coloneqq & \EE\left[\frac{X_t-X_0}{s} +X_0 \mid X_0\right]^2 \\
& + \frac{ \max(1,p-1)}{ e^{2t} -1}  \EE\left[\frac{(X_t-X_0)^{2}}{2 s}  - 1 \mid X_0 \right]^2 \\
& +  \sum_{k>2} \frac{\max(1,p-1)^{k-1}}{s^2 k k!  (e^{2t}-1)^{k-1} }  \EE\left[(X_t-X_0)^k \mid X_0\right]^2.
\end{align*}
Finally, by (\ref{eq:OVp}), 
\[
W_p(\nu,\gamma) \leq \int_0^\infty \EE[|\rho_t|^p]^{1/p} dt \leq \int_0^\infty e^{-t} \EE[S_p(t)^{p/2}]^{1/p} dt,
\]
and using approximation arguments concludes the proof of Theorem~\ref{thm:WpGauss}. 
\subsection{Multi-dimensional case}
\label{sec:WpGaussexch}
Unfortunately, it is not possible to use a multi-dimensional generalization of the random vector $\tau_t$ defined in Lemma~\ref{lem:tau1} as we would only be able to show that
\[
\forall \test \in \mathcal{C}^\infty_c, \EE[\EE[\tau_t \mid F_t] \cdot \nabla \test(F_t)] = 0,
\]
which is not sufficient to assert that $\EE[\tau_t \mid F_t] = 0$. Instead, one can add an exchangeability assumption on the stochastic process $(X_t)_{t \geq 0}$ to obtain the following result. 

\begin{theorem}
\label{thm:WpGaussexch}
Let $p \geq 1$ and 
let $\nu$ be a probability measure on $\RR^d$ with finite moment of order $p$. Let $(X_t)_{t \geq 0}$ be a stochastic process such that $X_0$ is drawn from $\nu$ and such that the pairs $(X_0,X_t)$ and $(X_t, X_0)$ follow the same law for any $t >  0$. 
Suppose that, for any $\epsilon >  0$,
\beq
\label{eq:condition3}
\exists \xi, M >  0, \forall t \in [\epsilon, \epsilon^{-1}],  \EE\left[\|X_t - X_0\|^{p(1+\xi)} e^{\frac{ p(1+\xi) \max(1, p-1) \|X_t - X_0\|^2}{2(e^{2t}-1)}}\right] \leq M. 
\eeq
Then, for any $s >  0$, 
\[
W_p(\nu,\gamma) \leq \int_0^\infty e^{-t} \EE[S_p(t)^{p/2}]^{1/p} dt,
\]
where
\begin{align*}
S_p(t)  & = \left\|\EE\left[\frac{X_t-X_0}{s}+X_0 \mid X_0\right]\right\|^2 \\
& +  \frac{\max(1, p-1)}{e^{2t}-1}  \left\|\EE\left[\frac{(X_t-X_0)^{\otimes 2}}{2 s} - I_d \mid X_0 \right]\right\|^2 \\
& +  \sum_{k>2} \frac{\max(1, p-1)^{k-1}}{4 s^2 (k-1)! (e^{2t}-1)^{k-1}} \left\|\EE[(X_t-X_0)^{\otimes k} \mid X_0] \right\|^2.
\end{align*}
\end{theorem}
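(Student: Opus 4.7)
The plan is to mirror the template of Theorem~\ref{thm:WpGauss}. Following Section~\ref{sec:WpGauss1}, I would first assume that $\nu$ admits a density $h = \epsilon + f$ with $\epsilon > 0$ and $f \in \mathcal{C}^\infty_c$ and that $\|X_t - X_0\|$ is bounded; the passage to the hypothesis (\ref{eq:condition3}) will be handled at the very end by the approximation results of Section~\ref{sec:W2approx}. Under these working hypotheses, (\ref{eq:OVp}), Lemma~\ref{lem:score}, and the Jensen-type inequality (\ref{eq:taut}) reduce the problem to producing an $\RR^d$-valued random variable $\tau_t$ with $\EE[\tau_t \mid F_t] = 0$ for each $t > 0$, and then bounding $\EE[\|\tau_t + e^{-t}X_0 - \tfrac{e^{-2t}}{\sqrt{1-e^{-2t}}}Z\|^p]^{1/p}$ by $e^{-t}\EE[S_p(t)^{p/2}]^{1/p}$.

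Motivated by Lemma~\ref{lem:tau1}, I would set
\[
\tau_t := \frac{e^{-t}}{s}\EE[X_t - X_0 \mid X_0] + \frac{1}{2s}\sum_{k \geq 2}\frac{e^{-kt}}{(k-1)!\,(1-e^{-2t})^{(k-1)/2}}\,\EE\bigl[(X_t - X_0)\langle (X_t - X_0)^{\otimes(k-1)}, H_{k-1}(Z)\rangle \mid X_0, Z\bigr].
\]
The main obstacle, and the one place the argument genuinely needs exchangeability rather than mere equality in law, is the identity $\EE[\tau_t \mid F_t] = 0$. I would verify it by computing $\EE[\tau_t \cdot \Phi(F_t)]$ for an arbitrary smooth vector field $\Phi$: using (\ref{eq:ippgauss}) to turn each Hermite factor into $(e^{2t} - 1)^{(k-1)/2}\partial^\alpha P_t\Phi_i$, the $k \geq 2$ piece is recognisable as $\frac{e^{-t}}{2s}\EE[(X_t - X_0)\cdot(P_t\Phi(X_t) - P_t\Phi(X_0))]$ (the Taylor expansion of $P_t\Phi(X_t)$ at $X_0$ minus its zeroth-order term), and the $k = 1$ piece contributes $\frac{e^{-t}}{s}\EE[(X_t - X_0) \cdot P_t\Phi(X_0)]$. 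Summing,
\[
\EE[\tau_t \cdot \Phi(F_t)] = \frac{e^{-t}}{2s}\EE[(X_t - X_0)\cdot(P_t\Phi(X_t) + P_t\Phi(X_0))],
\]
which vanishes by the exchangeability of $(X_0, X_t)$---this is exactly the identity underlying the exchangeable-pair operator~(\ref{eq:exchpairs}). Since $\Phi$ was arbitrary, $\EE[\tau_t \mid F_t] = 0$. The coefficients of $\tau_t$, in particular the $\tfrac{1}{2s}$ prefactor on the $k \geq 2$ sum together with the \emph{undoubled} $k = 1$ term, are calibrated precisely so that the two pieces combine into the symmetric sum $P_t\Phi(X_t) + P_t\Phi(X_0)$ on which exchangeability acts.

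With $\tau_t$ in hand, I would expand $\tau_t + e^{-t}X_0 - \tfrac{e^{-2t}}{\sqrt{1-e^{-2t}}}Z$ as a multivariate Hermite series $\sum_\alpha M_\alpha H_\alpha(Z)$ with $\sigma(X_0)$-measurable vector coefficients: the $\alpha = 0$ coefficient is $e^{-t}\EE[\tfrac{X_t - X_0}{s} + X_0 \mid X_0]$; the $|\alpha| = 1$ coefficients assemble the $k = 2$ contribution and the Gaussian target into coefficients proportional to $\EE[\tfrac{(X_t - X_0)^{\otimes 2}}{2s} - I_d \mid X_0]$; for $k \geq 3$ each coefficient is built from $\EE[(X_t - X_0)^{\otimes k} \mid X_0]$. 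Conditioning on $X_0$ and invoking Lemma~\ref{lem:hypercon} yields the upper bound $\sum_\alpha \max(1, p-1)^{|\alpha|}\alpha!\|M_\alpha\|^2$. The key tensor identity I would use to reduce this to the form of $S_p(t)$ is
\[
\sum_{|\alpha| = k-1}\frac{1}{\alpha!}\sum_{i=1}^d T_{\alpha + e_i}^2 = \frac{\|T\|^2}{(k-1)!}
\]
valid for any symmetric $k$-tensor $T$, which follows from $\sum_i \beta_i / \beta! = k / \beta!$ for multi-indices with $|\beta| = k$. Applied with $T = \EE[(X_t - X_0)^{\otimes k} \mid X_0]$, it collapses the Hermite bound to exactly $e^{-2t}S_p(t)$.

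Pulling the $e^{-2t}$ outside, raising to the power $p/2$, taking expectation over $X_0$, and raising to $1/p$ gives $\EE[\|\rho_t\|^p]^{1/p} \leq e^{-t}\EE[S_p(t)^{p/2}]^{1/p}$; integrating via (\ref{eq:OVp}) then yields the announced bound. Finally, the approximation arguments from Section~\ref{sec:W2approx} remove the smoothness and boundedness assumptions, with (\ref{eq:condition3}) ensuring the Hermite expansion defining $\tau_t$ converges and that the bounded-support approximations pass to the limit; the extra factor $\|X_t - X_0\|^{p(1+\xi)}$ in (\ref{eq:condition3}), compared with the one-dimensional (\ref{eq:condition2}), accommodates the additional $X_t - X_0$ prefactor that appears in each $k$-piece because $\tau_t$ is vector-valued rather than scalar-valued. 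The chief technical difficulty is concentrated in the construction of $\tau_t$ and the exchangeability-based verification of $\EE[\tau_t \mid F_t] = 0$; once that identity is secured, the remaining work is tensor and Hermite-expansion bookkeeping faithful to Section~\ref{sec:WpGauss1}.
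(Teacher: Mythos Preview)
Your proposal is correct and follows essentially the same route as the paper: your $\tau_t$ coincides with the one defined in Lemma~\ref{lem:tau2} (the undoubled $k=1$ term plus the $\tfrac{1}{2s}$-weighted $k\ge 2$ sum is exactly the paper's $\frac{e^{-t}}{2s}(X_t-X_0)(1+\sum_\alpha\cdots)$ expanded), and your exchangeability verification, Hermite bound via Lemma~\ref{lem:hypercon}, and tensor identity are the paper's Lemma~\ref{lem:tau2} proof and the rearrangement~(\ref{eq:rearrange}) made explicit. The only cosmetic difference is that you test $\EE[\tau_t\mid F_t]=0$ against arbitrary vector fields $\Phi$ rather than scalar $\phi$, which is of course equivalent.
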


Let $s > 0$ and let $(X_t)_{t \geq 0}$ be a stochastic process such that, for any $t \geq 0$, $(X_t, X_0)$ and $(X_0, X_t)$ follow the same law and $\|X_t - X_0\|$ is bounded. Again, this last assumption as well as our previous smoothness assumptions on the measure $\nu$ can be replaced by (\ref{eq:condition3}) thanks to approximation arguments derived in Section~\ref{sec:W2approx}. Let us start by using the stochastic process $(X_t)_{t \geq 0}$ to define a quantity $\tau_t$ such that $\EE[\tau_t \mid F_t] = 0$. 
\begin{lemma}
\label{lem:tau2}
Let $s,t > 0$.
The quantity 
\[
\tau_t \coloneqq \left[\frac{e^{-t}}{2s} (X_t-X_0) \left( 1 + \sum_{\multiindex} \frac{(X_t-X_0)^{\multiindex} H_\multiindex(Z)}{\multiindex! (e^{2t}-1)^{|\multiindex|/2}} \right) \mid X_0, Z \right]
\]
satisfies 
\[
\EE[\tau_t \mid F_t] = 0. 
\]
\end{lemma}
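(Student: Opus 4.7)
The plan is to verify $\EE[\tau_t\,g(F_t)] = 0$ for every $g \in \mathcal{C}^\infty_c$, which implies $\EE[\tau_t \mid F_t] = 0$ almost surely. The decisive observation is that the series inside $\tau_t$ is precisely the generating function of the multivariate Hermite polynomials: using the product structure $H_\multiindex(z) = \prod_j H_{\multiindex_j}(z_j)$ and setting $c_t \coloneqq (X_t - X_0)/\sqrt{e^{2t}-1}$, one has
\[
\sum_{\multiindex} \frac{(X_t-X_0)^\multiindex H_\multiindex(Z)}{\multiindex!\,(e^{2t}-1)^{|\multiindex|/2}} \;=\; e^{Z \cdot c_t - \|c_t\|^2/2}.
\]
Granting this identity, whose termwise justification is discussed below, the definition of $\tau_t$ reduces to
\[
\tau_t \;=\; \frac{e^{-t}}{2s}\,\EE\bigl[(X_t - X_0)(1 + e^{Z \cdot c_t - \|c_t\|^2/2}) \mid X_0, Z\bigr].
\]

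Next, introduce $G_t \coloneqq e^{-t}X_t + \sqrt{1-e^{-2t}}\,Z = e^{-t}X_0 + \sqrt{1-e^{-2t}}\,(Z + c_t)$. A translation of variable in the Gaussian integral over $Z$, performed conditionally on $(X_0, X_t)$ (which are independent of $Z$), yields the Girsanov-type identity
\[
\EE[g(G_t) \mid X_0, X_t] \;=\; \EE\bigl[g(F_t)\,e^{Z \cdot c_t - \|c_t\|^2/2} \mid X_0, X_t\bigr].
\]
Multiplying by $g(F_t)$, taking expectation, and applying the tower property to absorb the outer conditional expectation appearing in $\tau_t$ gives
\[
\EE[\tau_t\,g(F_t)] \;=\; \frac{e^{-t}}{2s}\bigl(\EE[(X_t - X_0)g(F_t)] + \EE[(X_t - X_0)g(G_t)]\bigr).
\]
The exchangeability assumption is now invoked: swapping $X_0 \leftrightarrow X_t$ (with $Z$ unchanged, as $Z$ is independent of $(X_0, X_t)$) leaves the joint law invariant, sends $X_t - X_0 \mapsto -(X_t - X_0)$, and interchanges $F_t$ and $G_t$. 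Hence $\EE[(X_t - X_0)g(G_t)] = -\EE[(X_t - X_0)g(F_t)]$, the two pieces in the display above cancel, and $\EE[\tau_t\,g(F_t)] = 0$.

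The main technical obstacle is the justification of the termwise Hermite expansion, i.e. the Fubini/dominated-convergence step needed to identify the series with $e^{Z \cdot c_t - \|c_t\|^2/2}$ inside the relevant expectations. One has to produce an $L^1$ majorant for $\|X_t - X_0\|\cdot\sum_\multiindex |(X_t-X_0)^\multiindex H_\multiindex(Z)|/(\multiindex!(e^{2t}-1)^{|\multiindex|/2})$ times $|g(F_t)|$. This is precisely the role of condition (\ref{eq:condition3}): combined with standard pointwise bounds on $|H_\multiindex(Z)|$ and Cauchy-Schwarz in $Z$, the exponential moment of $\|X_t - X_0\|^2/(e^{2t}-1)$ together with the extra $\|X_t - X_0\|$-power it furnishes supplies the needed uniform integrability on each compact range $t \in [\epsilon,\epsilon^{-1}]$, licensing the interchange.
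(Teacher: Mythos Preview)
Your proof is correct and follows the same overall strategy as the paper: test against $g \in \mathcal{C}^\infty_c$, show that $\EE[\tau_t\, g(F_t)] = \frac{e^{-t}}{2s}\,\EE[(X_t-X_0)(g(F_t)+g(F'_t))]$ (your $G_t$ is the paper's $F'_t$), and conclude via exchangeability. The only difference is in how the series is collapsed: the paper applies (\ref{eq:ippgauss}) term by term to turn each $H_\multiindex(Z)g(F_t)$ into a derivative of $P_t g$ at $X_0$ and then invokes the Taylor expansion of Lemma~\ref{lem:analytic} to recognise $g(F'_t)$, whereas you sum the Hermite generating function in closed form and use a Cameron--Martin shift directly. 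These are two expressions of the same identity, and your packaging is arguably cleaner. One remark: the lemma in the paper is proved under the standing hypothesis that $\|X_t-X_0\|$ is bounded (the general case is handled afterwards by the approximation arguments of Section~\ref{sec:W2approx}), so the termwise justification is immediate and you need not invoke condition~(\ref{eq:condition3}) at this stage.
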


\begin{proof}
Let $\test \in \mathcal{C}^\infty_c$. We have
\[
\EE[\EE[\tau_t \mid F_t] \test(F_t)] = \EE[\tau_t \test(F_t)].
\]
Hence, by (\ref{eq:ippgauss}), 
\begin{multline*}
\EE[\EE[\tau_t \mid F_t] \test(F_t)] = \\
\frac{e^{-t}}{2s}  \EE\left[(X_t-X_0) \left(\test(F_t) + \sum_{\multiindex} \frac{e^{-|\multiindex|t}}{\multiindex !} (X_t-X_0)^\multiindex \partial^\multiindex \test(F_t) \right)\right].
\end{multline*}
Let $F'_t \coloneqq e^{-t}X_t + \sqrt{1 - e^{-2t}} Z$. 
By Lemma~\ref{lem:analytic}, we have 
\[
\EE[\EE[\tau_t \mid F_t] \test(F_t)]  = \frac{e^{-t}}{2s} \EE\left[(X_t-X_0) \left(\test(F_t) + \test(F'_t)  \right)\right].
\]
Then, since the pairs $(X_0,X_t)$ and $(X_t,X_0)$ follow the same law, 
\[
\EE\left[ (X_t-X_0)(\test(F_t) + \test(F'_t))\right] = 0
\]
 and thus 
$\EE[\tau_t \mid F_t] = 0$.
\end{proof}

Returning to the proof of Theorem~\ref{thm:WpGaussexch} and using Lemma~\ref{lem:tau2} along with 
Lemma~\ref{lem:score} and Jensen's inequality, we obtain 
\[
\EE[\|\rho_t\|^p]^{1/p} \leq \EE\left[\left\|e^{-t} X_0 + \frac{e^{-2t}}{\sqrt{1-e^{-2t}}} Z + \tau_t\right\|^p\right]^{1/p}.
\] 
Thus, by Lemma~\ref{lem:hypercon}, 
\[
\EE[\|\rho_t\|^p]^{1/p} \leq e^{-t} \EE[\|S_p(t)\|^{p/2}]^{1/p}, 
\]
where
\begin{align*}
S_p(t) \coloneqq & \left\|\EE\left[\frac{X_t-X_0}{s}+X_0 \mid X_0\right]\right\|^2 \\
& + \frac{\max(1, p-1)}{e^{2t}-1}  \left\|\EE\left[\frac{(X_t-X_0)^{\otimes 2}}{2 s} - I_d \mid X_0 \right]\right\|^2 \\
& + \sum_{|\alpha| > 1} \frac{\max(1, p-1)^{|\alpha|} }{4 s^2 \alpha! (e^{2t}-1)^{|\alpha|}} \|\EE[(X_t - X_0)(X_t-X_0)^{\alpha} \mid X_0]\|^2.
\end{align*} 
Then, injecting this bound in (\ref{eq:OVp}) yields 
\[
W_p(\nu,\gamma) \leq \int_0^\infty e^{-t} \EE[S_p(t)^{p/2}]^{1/p} dt.
\]
Finally, rearranging terms in $S_p(t)$ using (\ref{eq:rearrange}) and using approximation arguments concludes the proof of Theorem~\ref{thm:WpGaussexch}. 

\section{Central Limit Theorem for the $W_2$ distance}
\label{sec:CTLcomp2}
Let $m \in (0,2]$ and $X_1,\dots,X_n$ be independent random variables taking values in $\RR^d$ and such that
\begin{itemize}
\item $\forall i \in \{1,\dots,n\}, \EE[X_i] = 0$; 
\item $\sum_{i=1}^n \EE[X_i^{\otimes 2}] = n I_d$ and
\item $\sum_{i=1}^n \EE[\|X_i\|^{2+m}] < \infty$.
\end{itemize} 
It is known that the measure $\nu_n$ of the random variable $S_n \coloneqq n^{-1/2} \sum_{i=1}^n X_i$ converges to the Gaussian measure $\gamma$. The remainder of this Section is dedicated to quantifying this convergence for the Wasserstein distance of order $2$ in order to obtain the following result. 
\begin{theorem}
\label{thm:W2CTL}
Under the above setting, taking 
\begin{align*}
& C = 8 + \sum_{k > 0} \frac{4^k}{k (k)!}, \\
& M(0)^2 = \frac{1}{n}\sum_{i=1}^n \sum_{k >0} \frac{16^k}{2k (2k)!} \|\EE[X_i^{\otimes 2}]\|^2, \\
& \forall l \in [0,2], M(l)^2 = M(0)^2 + \frac{1}{n}\sum_{i=1}^n 8 \|\EE[X_i^{\otimes 2}]\|^2 \EE[\|X_i\|^l]^2 + 4 \|\EE[X_i^{\otimes 2} \|X_i\|^l]\|^2 ,
\end{align*}
we have, for any $n > 4$,
\begin{multline*}
W_2(\nu_n,\gamma) \leq \frac{ \left(C \sum_{i=1}^n \EE[\|X_i\|^{2+m}]\right)^{1/2}}{n^{(2+m)/4}} + \frac{\left(2 \sum_{i=1}^n \|\EE[X_i^{\otimes 2}]\|^2\right)^{1/2}}{n} + \\
 \begin{cases}
\frac{\sqrt{2} M(0)}{n^{1/2}} + \frac{M(m) \log(n)}{2 n^{m/2}} \text{ if $m < 1$} \\
\frac{\sqrt{2} M(0)}{n^{1/2}} + \frac{M(1) \log(n)}{2 n^{1/2}} \text{ if $ 1 \leq m < 2$}\\
2 \left(\frac{M(0) M(2) }{n}\right)^{1/2} \text{ if $m = 2$}
\end{cases}.
\end{multline*}
\end{theorem}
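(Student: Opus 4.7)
The plan is to apply Theorem~\ref{thm:mainGauss} to the exchangeable process
\[
(S_n)_t = S_n + \frac{(X'_I - X_I)\mathbf{1}_{\|X_I\|\vee\|X'_I\|\leq R_t}}{\sqrt{n}}, \qquad R_t \coloneqq \sqrt{n(e^{2t}-1)},
\]
introduced before the statement of Theorem~\ref{thm:CTLmain}, where $X'_1,\ldots,X'_n$ is an independent copy of $X_1,\ldots,X_n$ and $I$ is uniform on $\{1,\ldots,n\}$, independent of everything else. Swapping $X_I$ with $X'_I$ is a measure-preserving involution sending $((S_n)_0,(S_n)_t)$ to $((S_n)_t,(S_n)_0)$, so $(S_n)_t$ has marginal law $\nu_n$ for every $t>0$; moreover $\|(S_n)_t-S_n\|^2\leq 4(e^{2t}-1)$ holds deterministically, so condition~(\ref{eq:condition}) is trivially satisfied.

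The heart of the argument is to compute the three pieces of $S(t)$ under a suitable rescaling $s=s(t)$ of order $1/n$ (the scale at which, once $R_t$ exceeds the typical size of the $X_i$, the conditional mean and covariance of $(S_n)_t-(S_n)_0$ match $-S_n/n$ and $2I_d/n$). By Jensen's inequality in the form $\|\EE[Y\mid X_0]\|^2\leq\EE[\|\EE[Y\mid X_1,\ldots,X_n]\|^2\mid X_0]$, each squared norm in $S(t)$ can be bounded after passing to the finer $\sigma$-algebra $\mathcal{F}=\sigma(X_1,\ldots,X_n)$. Writing $\pi_i = \mathbf{1}_{\|X_i\|\leq R_t}$ and $p_i = \PP(\|X_i\|\leq R_t)$, the independence of $X'_i$ from $X_i$ together with $\EE[X_i]=0$ yields the closed-form identity
\[
\EE[(S_n)_t-S_n\mid\mathcal{F}] = -\frac{1}{n^{3/2}}\sum_{i=1}^n\pi_i\bigl(p_i X_i+\EE[X_i\mathbf{1}_{\|X_i\|>R_t}]\bigr),
\]
with an analogous formula for the second conditional moment involving the truncated quantity $\EE[X_i^{\otimes 2}\mathbf{1}_{\|X_i\|\leq R_t}]$. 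After rescaling, the drift and diffusion match $-X_0$ and $I_d$ up to (i) truncation defects controlled via Markov $\PP(\|X_i\|>R_t)\leq R_t^{-(2+m)}\EE[\|X_i\|^{2+m}]$, and (ii) the centred fluctuation $\tfrac{1}{n}\sum_i(X_i^{\otimes 2}-\EE[X_i^{\otimes 2}])$ whose squared Hilbert--Schmidt norm, by independence of the summands, averages to $\tfrac{1}{n^2}\sum_i\|\EE[X_i^{\otimes 2}]\|^2$ and produces the bound's $(2\sum_i\|\EE[X_i^{\otimes 2}]\|^2)^{1/2}/n$ term.

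For $k\geq 3$, the pointwise bound $\|(S_n)_t-S_n\|^{2k-4}\leq 4^{k-2}(e^{2t}-1)^{k-2}$ absorbs the $(e^{2t}-1)^{k-1}$ denominator in $S(t)$, leaving a residual factor $\|(S_n)_t-S_n\|^4$ which is in turn controlled via the interpolation $\|(S_n)_t-S_n\|^4\leq (2\sqrt{e^{2t}-1})^{2-m}\|(S_n)_t-S_n\|^{2+m}$ by $\sum_i\EE[\|X_i\|^{2+m}]$; summing the geometric series in $k$ produces the explicit closed form $C=8+\sum_{k>0}4^k/(k\,k!)$ and the first summand $(C\sum_i\EE[\|X_i\|^{2+m}])^{1/2}/n^{(2+m)/4}$ of the announced bound. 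Plugging everything into $\int_0^\infty e^{-t}\EE[S(t)]^{1/2}\,dt$ and splitting at the critical time $t^\star\asymp\tfrac{1}{2}\log n$ (where $R_{t^\star}\asymp\sqrt n$), the three announced regimes emerge from evaluating elementary integrals of the form $\int e^{-t}(e^{2t}-1)^{-\alpha}dt$: for $m=2$ both halves around $t^\star$ contribute comparably and combine by Cauchy--Schwarz into $2(M(0)M(2)/n)^{1/2}$, while for $m<2$ the truncation integrand is logarithmically divergent at $t^\star$, yielding the factor $\log n$ multiplied by $M(m)/n^{m/2}$ (capped at $M(1)\log n/\sqrt n$ once $m\geq 1$) on top of a residual $\sqrt{2}\,M(0)/\sqrt n$ from the independent fluctuation of $X_i^{\otimes 2}$.

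The main obstacle is the bookkeeping of the truncation corrections: naive pointwise estimates summed over $i$ lose a $\sqrt n$, so one must fully exploit independence of the $X_i$ (so that squared expectations of independent mean-zero contributions add, giving the $\sqrt n$ gain) and must express the covariance truncation through the tensor quantity $\|\EE[X_i^{\otimes 2}\|X_i\|^l]\|$ appearing in $M(l)$ rather than the coarser factorised product $\|\EE[X_i^{\otimes 2}]\|\cdot\EE[\|X_i\|^l]$. Once these refinements are in place, extracting the precise numerical constants and the exact forms of $M(l)$ is a lengthy but mechanical summation of prefactors.
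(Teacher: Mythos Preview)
Your overall strategy matches the paper's proof: apply Theorem~\ref{thm:mainGauss} to the same process with $s=1/n$, pass via Jensen to conditioning on all of $X_1,\dots,X_n$, expand the squared norms using independence of the $X_i,X'_i$ into ``diagonal'' terms (the paper's $J_{k,i}$) and ``cross'' terms (the $I_{k,i}$, vanishing for odd $k$), and then integrate $e^{-t}\EE[R(t)]^{1/2}$. The bookkeeping you describe for the $k\ge 3$ tail and for the tensor quantities $\|\EE[X_i^{\otimes 2}\|X_i\|^l]\|$ is also the paper's.

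However, your integration step has a concrete error. You place the split at $t^\star\asymp\tfrac12\log n$ and claim $R_{t^\star}\asymp\sqrt n$, but $R_t=\sqrt{n(e^{2t}-1)}$ gives $R_{t^\star}\approx n$ there. More importantly, the integrand $e^{-t}M(l)/(n^{l/2}\Delta(t)^{(1+l)/2})$ with $\Delta(t)=e^{2t}-1$ is singular at $t=0$, not at large $t$; the $\log n$ comes from $\int_{t_0}^\infty e^{-t}\Delta(t)^{-1}\,dt\asymp -\log t_0$ with $t_0$ small. The paper splits at $t_0=1/n$ (and at $t_0\asymp M(2)/(M(0)\sqrt n)$ when $m=2$, balancing $M(0)\sqrt{2t_0}$ against $M(2)/(n\sqrt{2t_0})$ by AM--GM rather than Cauchy--Schwarz). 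With your split at $\tfrac12\log n$ the small-$t$ piece gives $\int_0^{t^\star}e^{-t}M(0)\Delta(t)^{-1/2}\,dt = M(0)(1-e^{-2t^\star})^{1/2}\approx M(0)$, not $M(0)/\sqrt n$, so the announced bound cannot be recovered.

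A minor slip: the squared Hilbert--Schmidt norm of $\tfrac1n\sum_i(X_i^{\otimes 2}-\EE[X_i^{\otimes 2}])$ has expectation $\tfrac1{n^2}\sum_i(\EE\|X_i\|^4-\|\EE[X_i^{\otimes 2}]\|^2)$, not $\tfrac1{n^2}\sum_i\|\EE[X_i^{\otimes 2}]\|^2$. The term $(2\sum_i\|\EE[X_i^{\otimes 2}]\|^2)^{1/2}/n$ in the bound comes instead from the crude split $\|a-b\|^2\le 2\|a\|^2+2\|b\|^2$ applied inside $J_{2,i}$, with the $\EE\|X_i\|^4$ part absorbed into the $C\sum_i\EE[\|X_i\|^{2+m}]$ term.
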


Let $X'_1,\dots,X'_n$ be independent copies of the variables $X_1, \dots, X_n$. For any $t > 0$, let $\Delta(t) \coloneqq e^{2t} - 1$ and
\[
(S_{n})_t \coloneqq S_n + n^{-1/2}(X'_I - X_I)1_{\|X'_I\| \vee \|X_I\| \leq \sqrt{n \Delta(t)}},
\]
where $I$ is a uniform random variable taking values in $\{1,\dots,n\}$ and $\|X'_I\| \vee \|X_I\|$ denotes the maximum between $\|X'_I\|$ and $\|X_I\|$. 

For any $t \geq 0$, $(S_{n})_t$ is drawn from the same measure as $S_n$ and $\|(S_{n})_t - S_n\| \leq 2 \sqrt{n \Delta(t)}$. 
Thus, we can apply Theorem~\ref{thm:mainGauss} to the measure $\nu_n$ of $S_n$ using the stochastic process $((S_n)_t)_{t \geq 0}$ with $s = \frac{1}{n}$ to obtain 
\[
W_2(\nu_n, \gamma) \leq \int_0^\infty e^{-t} \EE[S(t)]^{1/2} dt, 
\]
where 
\begin{align*}
S(t) \coloneqq & \EE\left[\sqrt{n} (X'_I - X_I)1_{\|X'_I\| \vee \|X_I\| \leq \sqrt{n \Delta(t)}} + S_n \mid S_n \right]^2 \\
& +  \frac{1}{\Delta(t)}  \left\|\EE\left[\frac{(X'_I - X_I)^{\otimes 2}}{2} 1_{\|X'_I\| \vee \|X_I\| \leq \sqrt{n \Delta(t)}} - I_d \mid S_n \right]\right\|^2 \\
& +  \sum_{k > 2} \frac{\|\EE[(X'_I - X_I)^{\otimes k} 1_{\|X'_I\| \vee \|X_I\| \leq \sqrt{n \Delta(t)}} \mid S_n]\|^2}{n^{k - 2} k k ! \Delta(t)^{k - 1}}.
\end{align*}
Let us bound $S(t)$ for $t > 0$. First, since $I$ and $S_n$ are independent, 
\begin{align*}
S(t) =  & \EE\left[\frac{1}{\sqrt{n}} \sum_{i=1}^n \left((X'_i - X_i)1_{\|X'_i\| \vee \|X_i\| \leq \sqrt{n \Delta(t)}} + X_i \right) \mid S_n \right]^2 \\
& + \frac{1}{\Delta(t)}  \left\|\EE\left[\sum_{i=1}^n \left( \frac{(X'_i - X_i)^{\otimes 2}}{2n} 1_{\|X'_i\| \vee \|X_i\| \leq \sqrt{n \Delta(t)}} \right) - I_d \mid S_n \right]\right\|^2 \\
& + \sum_{k > 2} \frac{\left\|\EE\left[\sum_{i=1}^n (X'_i - X_i)^{\otimes k} 1_{\|X'_i\| \vee \|X_i\| \leq \sqrt{n \Delta(t)}} \mid S_n\right]\right\|^2}{n^{k} k k ! \Delta(t)^{k - 1}} .
\end{align*}
Then, since $\EE[X'_i] = 0$ and since $X'_i$ and $S_n$ are independent, 
\begin{multline*}
\EE\left[\sum_{i=1}^n (X'_i - X_i)1_{\|X'_i\| \vee \|X_i\| \leq \sqrt{n \Delta(t)}} + X_i \mid S_n \right] = \\
\EE\left[\sum_{i=1}^n (X_i - X'_i)1_{\|X'_i\| \vee \|X_i\| \geq \sqrt{n \Delta(t)}} \mid S_n \right]
\end{multline*}
and, since $\sum_{i=1}^n \EE[X_i^{\otimes 2}] = n I_d$,
\begin{align*}
S(t) =  & \frac{1}{n} \EE\left[\sum_{i=1}^n (X'_i - X_i)1_{\|X'_i\| \vee \|X_i\| \geq \sqrt{n \Delta(t)}} \mid S_n \right]^2 \\
& +  \frac{\left\|\EE\left[\frac{1}{n} \sum_{i=1}^n \left( (X'_i - X_i)^{\otimes 2} 1_{\|X'_i\| \vee \|X_i\| \leq \sqrt{n \Delta(t)}} - 2 \EE[X_i^{\otimes 2}]\right)\mid S_n \right]\right\|^2}{4 n^2 \Delta(t)}  \\
& +  \sum_{k > 2} \frac{\left\|\EE\left[\sum_{i=1}^n (X'_i - X_i)^{\otimes k} 1_{\|X'_i\| \vee \|X_i\| \leq \sqrt{n \Delta(t)}} \mid S_n\right]\right\|^2}{n^{k} k k ! \Delta(t)^{k - 1}}  .
\end{align*}
Now, taking 
\begin{align*}
R(t) \coloneqq & \frac{1}{n} \left\| \sum_{i=1}^n (X_i - X'_i)1_{\|X'_i\| \vee \|X_i\| \geq \sqrt{n \Delta(t)}}\right\|^2 \\
& +  \frac{\left\|\sum_{i=1}^n (X'_i - X_i)^{\otimes 2} 1_{\|X'_i\| \vee \|X_i\| \leq \sqrt{n \Delta(t)}} - 2\EE[X_i^{\otimes 2}] \right\|^2}{4 n^2 \Delta(t)}   \\
& +   \sum_{k > 2} \frac{\left\|\sum_{i=1}^n (X'_i - X_i)^{\otimes k} 1_{\|X'_i\| \vee \|X_i\| \leq \sqrt{n \Delta(t)}}\right\|^2}{n^{k} k k ! \Delta(t)^{k - 1}}
\end{align*}
and applying Jensen's inequality yields 
\[
S(t) \leq \EE[R(t) \mid S_n].
\]
Therefore,
\[
W_2(\nu_n, \gamma) \leq \int_0^\infty e^{-t} \EE[R(t)]^{1/2} dt.
\]
From here, developing the squared terms and using the independence of the $(X_i)_{1 \leq i \leq n}$ and $(X'_i)_{1 \leq i \leq n}$, we obtain 
\[
\EE[R(t)] \leq \sum_{k>0} \frac{1}{n^k k k! \Delta(t)^{k-1}}\left(\sum_{i=1}^n (n-1) I_{k,i}(t) + J_{k,i}(t) \right) ,
\]
where, for any $i \in \{1,\dots, n\}$, 
\begin{align*}
I_{1,i}(t) \coloneqq  & \|\EE[(X_i - X'_i)1_{\|X'_i\| \vee \|X_i\| \geq \sqrt{n \Delta(t)}}]\|^2, \\
I_{2,i}(t) \coloneqq & \|\EE[(X'_i - X_i)^{\otimes 2} 1_{\|X'_i\| \vee \|X_i\| \leq \sqrt{n \Delta(t)}} - 2\EE[X_i^{\otimes 2}]]\|^2, \\
\forall k > 2, I_{k,i}(t) \coloneqq & \|\EE[ (X'_i - X_i)^{\otimes k} 1_{\|X'_i\| \vee \|X_i\| \leq \sqrt{n \Delta(t)}} ]\|^2
\end{align*}
and
\begin{align*}
J_{1,i}(t) \coloneqq  & \EE[\|X_i - X'_i\|^2 1_{\|X'_i\| \vee \|X_i\| \geq \sqrt{n \Delta(t)}}], \\
J_{2,i}(t) \coloneqq & \EE[\| (X'_i - X_i)^{\otimes 2} 1_{\|X'_i\| \vee \|X_i\| \leq \sqrt{n \Delta(t)}}  - 2 \EE[X_i^{\otimes 2}] \|^2], \\
\forall k > 2, J_{k,i}(t) \coloneqq & \EE[\|X'_i - X_i\|^{2k} 1_{\|X'_i\| \vee \|X_i\| \leq \sqrt{n \Delta(t)}}].
\end{align*}
\subsection{Bounding $I_k$}
Let $i \in \{1,\dots,n\}$ and let $k$ be an odd integer. Since $X_i$ and $X'_i$ are i.i.d., 
\[
I_{k,i}(t) = 0.
\]
Let us now deal with $I_{2,i}(t)$. Since $\EE[(X'_i - X_i)^{\otimes 2}] = 2 \EE[X_i^{\otimes 2}]$, we have 
\[
I_{2,i}(t) = \|\EE[(X'_i - X_i)^{\otimes 2} 1_{\|X'_i\| \vee \|X_i\| \geq \sqrt{n \Delta(t)}}]\|^2.
\]
First, since $(X'_i - X_i)^{\otimes 2}$ is positive, 
\[
I_{2,i}(t) \leq 4\|\EE[X_i^{\otimes 2}]\|^2.
\]
Now, taking $l \in (0,m]$, we have 
\begin{align*}
I_{2,i}(t) \leq & \frac{1}{(n \Delta(t))^l} \|\EE[(X'_i - X_i)^{\otimes 2} (\|X_i\| \vee \|X'_i\|)^l]\|^2 \\
& \leq \frac{16}{(n \Delta(t))^l} \|\EE[X_i^{\otimes 2} (\|X_i\| \vee \|X'_i\|)^l]\|^2 \\
& \leq \frac{16}{(n \Delta(t))^l} \|\EE[X_i^{\otimes 2} \|X_i\|^l] + \EE[X_i^{\otimes 2 }] \EE[\|X_i\|^l] \|^2 \\
& \leq \frac{32}{(n \Delta(t))^l}\left(\|\EE[X_i^{\otimes 2} \|X_i\|^l]\|^2 + \|\EE[X_i^{\otimes 2}]\|^2 \EE[\|X_i\|^l]^2 \right)
\end{align*}
and, since this bound is valid for any $l \in (0,m]$, 
\[
I_{2,i}(t) \leq \inf_{l \in (0,m]} \frac{32}{(n \Delta(t))^l}\left(\|\EE[X_i^{\otimes 2} \|X_i\|^l]\|^2 + \|\EE[X_i^{\otimes 2}]\|^2 \EE[\|X_i\|^l]^2 \right).
\]
Similarly, for any even integer $k > 2$ and any $l \in [0,m]$,
\begin{align*}
I_{k,i}(t) & \leq 4^k \|\EE[X_i^{\otimes k}  1_{\|X'_i\| \vee \|X_i\| \leq \sqrt{n \Delta(t)}}]\|^2 \\
& \leq 4^k \|\EE[X_i^{\otimes k} 1_{\|X_i\| \leq \sqrt{n \Delta(t)}}]\|^2 \\
& \leq 4^k \|\EE[X_i^{\otimes 2} \|X_i\|^{k-2} 1_{\|X_i\| \leq \sqrt{n \Delta(t)}}]\|^2 \\
& \leq 4^k ( n \Delta(t))^{k-(l+2)} \|\EE[X_i^{\otimes 2}  \|X_i\|^l]\|^2 \\
& \leq 4^k ( n \Delta(t))^{k-(l+2)} \|\EE[X_i^{\otimes 2} \|X_i\|^l]\|^2,
\end{align*}
leading to
\[
I_{k,i}(t) \leq \inf_{l \in [0,m]} 4^k ( n \Delta(t))^{k-(l+2)} \|\EE[X_i^{\otimes 2} \|X_i\|^l]\|^2
\]
Let us introduce the quantity $M(l)$ defined for $l = 0$ by 
\[
M(0)^2 \coloneqq \frac{1}{n} \sum_{i=1}^n \sum_{k > 0} \frac{16^k}{(2k) (2k)!} \|\EE[X_i^{\otimes 2}]\|^2
\]
and, for any $l \in (0,m]$, by 
\[
M(l)^2 \coloneqq M(0)^2 + \frac{1}{n} \sum_{i=1}^n 4 \|\EE[X_i^{\otimes 2} \|X_i\|^l]\|^2 + 8\|\EE[X_i^{\otimes 2}]\| \EE[\|X_i\|^l].
\]
By combining our bounds on the $I_{k,i}$, we obtain 
\[
\sum_{k > 0} \frac{(n-1) \sum_{i=1}^n I_{k,i}(t)}{n^k k k! (e^{2t}-1)^{k-1}} \leq 
\inf_{l \in [0,m]} \frac{M(l)^2}{n^{l} \Delta(t)^{l+1}}.
\]

\subsection{Bounding $J_k$}
Again, taking $i \in \{1,\dots,n\}$, we have 
\begin{align*}
J_{1,i}(t) & \leq 4 \EE[(\|X_i\| \vee \|X'_i\|)^2 1_{\|X'_i\| \vee \|X_i\| \geq \sqrt{n \Delta(t)}}] \\
& \leq \frac{4}{(n \Delta(t))^m} \EE[(\|X_i\| \vee \|X'_i\|)^{2+m}] \\
& \leq \frac{8}{(n \Delta(t))^m} \EE[\|X_i\|^{2+m}].
\end{align*}
Then,
\begin{align*}
J_{2,i}(t) & \leq 8 \|\EE[X_i^{\otimes 2}]\|^2 + 2 \EE[\|X_i - X'_i\|^4 1_{\|X'_i\| \vee \|X_i\| \leq \sqrt{n \Delta(t)}}] \\
& \leq 8 \|\EE[X_i^{\otimes 2}]\|^2 + 32 \EE[\|X_i\|^4  1_{\|X_i\| \leq \sqrt{n \Delta(t)}}] \\
& \leq 8 \|\EE[X_i^{\otimes 2}]\|^2 + 32 (n \Delta(t))^{1-m/2} \EE[\|X_i\|^{2+m}].
\end{align*}
Finally, for any integer $k > 2$, 
\begin{align*}
J_{k,i}(t) & \leq 4^k \EE[\|X_i\|^{2k} 1_{\|X'_i\| \vee \|X_i\| \leq \sqrt{n \Delta(t)}}]\\
& \leq 4^k (n \Delta(t))^{k - (2 + m)/2} \EE[\|X_i\|^{2+m}].
\end{align*}
Overall, letting
\[
C \coloneqq 8 + \sum_{k > 0} \frac{4^k}{k k!},
\]
we obtained 
\[
\sum_{k > 0} \frac{\sum_{i=1}^n J_{k,i}(t)}{n^k k k! (e^{2t}-1)^{k-1}}  \leq 
\sum_{i=1}^n \frac{C \EE[\|X_i\|^{2+m}]}{\Delta(t)^{m/2} n^{1+m/2}} + \frac{2 \|\EE[X_i^{\otimes 2}]\|^2}{\Delta(t) n^2}.
\]

\subsection{Integration with respect to $t$}
Thanks to the previous computations, we have 
\[
\EE[R(t)] \leq \frac{2 \sum_{i=1}^n \|\EE[X_i^{\otimes 2}]\|^2}{n^2 \Delta(t)}  + \frac{C \sum_{i=1}^n \EE[\|X_i\|^{2+m}] }{n^{1+m/2} \Delta(t)^{m/2}} 
+ \inf_{l \in [0,m]} \frac{M(l)^2}{n^l \Delta(t)^{1+l}}
\]
and thus
\begin{multline*}
\EE[R(t)]^{1/2} \leq \frac{\left(2 \sum_{i=1}^n \|\EE[X_i^{\otimes 2}]\|^2 \right)^{1/2}}{n \Delta(t)^{1/2}}
+ \frac{\left(C \sum_{i=1}^n \EE[\|X_i\|^{2+m}] \right)^{1/2}}{n^{1/2+m/4} \Delta(t)^{m/4}}  \\
+ \inf_{l \in [0,m]} \frac{M(l)}{n^{l/2} \Delta(t)^{(1+l)/2}}.
\end{multline*}
The next step of the proof consists in integrating $e^{-t} \EE[R(t)]^{1/2}$ with respect to $t$. 
First, 
\[
\int_0^\infty \frac{e^{-t}}{\Delta(t)^{1/2}} dt = 1. 
\]
And, since $\int_0^\infty e^{-t} dt = 1$, we have, by Jensen's inequality, 
\[
\int_0^\infty \frac{e^{-t}}{\Delta(t)^{m/4}} dt \leq \left( \int_0^\infty \frac{e^{-t}}{\Delta(t)^{1/2}} dt  \right)^{m/2} = 1.
\]
Let us now deal with the remaining term. Let us first assume that $m \leq 1$. 
Taking $t_0 \geq 0$, we have 
\begin{align*}
\int_0^\infty \inf_{l \in [0,m]} & \frac{e^{-t} M(l) }{n^{l/2} \Delta(t)^{(1+l)/2}} dt \\
&  \leq \int_0^{t_0} \frac{e^{-t_0} M(0)}{\Delta(t)^{1/2}} dt +  \frac{M(m)}{n^{m/2}} \int_{t_0}^\infty \frac{e^{-t}}{(\Delta(t)^{(m+1)/2})}dt \\
& \leq  M(0) (1-e^{-2 t_0})^{1/2} + \frac{M(m)}{n^{m/2}}  \left(\int_{t_0}^\infty \frac{e^{-t}}{e^{2t} - 1} dt \right)^{(m+1)/2} \\
& \leq M(0) \sqrt{2 t_0} + \frac{M(m)}{n^{m/2}}  \left(-e^{-t_0} - \frac{1}{2} \log\left(\frac{1-e^{-t_0}}{1 + e^{-t_0}}\right)\right)^{(m+1)/2} \\
& \leq M(0) \sqrt{2 t_0} - n^{-m/2} M(m) \left(\frac{\log(t_0/2)}{2}\right)^{\frac{m+1}{2}}.
\end{align*}
Since $n \geq 4 > e^2/2$, taking $t_0 = \frac{1}{n} $, we have $ - \frac{\log(t_0/2)}{2} > 1$ and
\[
\int_0^\infty \inf_{l \in [0,m]} \frac{e^{-t} M(l) }{n^{l/2} \Delta(t)^{(1+l)/2}} dt \leq \frac{\sqrt{2} M(0)}{n^{1/2}} + \frac{M(m) \log(n)}{2 n^{m/2}}.
\]
If $ 1 \leq m \leq 2$, performing the same computations with $l = 1$ for $t > t_0$ yields 
\[
\int_0^\infty \inf_{l \in [0,m]} \frac{e^{-t} M(l) }{n^{l/2} \Delta(t)^{(1+l)/2}} dt \leq \frac{\sqrt{2} M(0)}{n^{1/2}} + \frac{M(1) \log(n)}{2 n^{1/2}}.
\]
Finally, if $m = 2$, 
\begin{align*}
\int_0^\infty \inf_{l \in [0,m]} \frac{e^{-t} M(l) }{n^{l/2} \Delta(t)^{(1+l)/2}} dt 
& \leq \left(M(0) \sqrt{2 t_0} + \frac{M(2)}{n}  \int_{t_0}^\infty  \frac{e^{-t}}{\Delta(t)^{3/2}} dt \right) \\
& \leq \left(M(0) \sqrt{2 t_0} + \frac{M(2)}{n} \int_{t_0}^\infty  (2t)^{-3/2} dt \right) \\
& \leq \left(M(0) \sqrt{2 t_0} +  \frac{M(2)}{n \sqrt{2 t_0}} \right).
\end{align*}
Then, taking $t_0 =  \frac{M(2)}{2 M(0) \sqrt{n}}$,
\[
\int_0^\infty  \inf_{l \in [0,m]} \frac{e^{-t} M(l) }{n^{l/2} \Delta(t)^{(1+l)/2}} dt  \leq 2 \sqrt{\frac{M(0) M(2) }{n}},
\]
which concludes the proof of Theorem~\ref{thm:W2CTL}. 

\subsection{Simplifications whenever $\EE[X_i^{\otimes 2}] = I_d$}
\label{sec:iidcase}
Let us now assume that $\EE[X_i^{\otimes 2}] = I_d$ for any $i \in \{1,\dots,n\}$ and let $i \in \{1,\dots,n\}$. We have
\[
\|\EE[X_i^{\otimes 2}]\|^2 \leq d \leq \sqrt{d} \|\EE[X_i^{\otimes 2} \|X_i\|^2]\|.
\] 
Furthermore,
\begin{align*}
\|\EE[X_i^{\otimes 2}]\| \EE[\|X_i\|^2] & = d^{-1/2} \EE[\|X_i\|^2]^{2} \\
& \leq d^{-1/2} \EE[\|X_i\|^4] \\
& \leq d^{-1/2} \sum_{j=1}^d \EE[(X_i)_j^2 \|X_i\|^2] \\
& \leq \left( \sum_{j=1}^d \EE[(X_i)_j^2 \|X_i\|^2]^2 \right)^{1/2} \\
& \leq \|\EE[X_i^{\otimes 2} \|X_i\|^2]\|,
\end{align*}
leading to 
\[
M(0)^2 M(2)^2 \leq \left(\sum_{k >0} \frac{16^k}{2k (2k!)}\right) \left(12 + \sum_{k >0} \frac{16^k}{2k (2k!)}\right)  \frac{d \sum_{i=1}^n \|\EE[X_i^{\otimes 2} \|X_i\|^2]\|^2}{n}.
\]
Similarly, 
\begin{align*}
\EE[\|X_i\|^4] & = \sum_{j=1}^d \EE[(X_i)_j^2 \|X_i\|^2] \\
& \leq  \left( d \sum_{j=1}^d \EE[(X_i)_j^2 \|X_i\|^2]^2\right)^{1/2} \\
& \leq d^{1/2} \|\EE[X_i^{\otimes 2} \|X_i\|^2]\|.
\end{align*}
Therefore, taking 
\[
C' \coloneqq \left(8 + \sum_{k > 0} \frac{4^k}{k (k)!}\right)^{1/2} + \sqrt{2} + 2 \left(\sum_{k >0} \frac{16^k}{2k (2k!)}\right)^{1/4} \left(12 + \sum_{k >0} \frac{16^k}{2k (2k!)} \right)^{1/4},
\]
we have 
\beq
\label{eq:secondmom}
W_2(\nu_n, \gamma) \leq  C' \left( \frac{ d^{1/2} \sum_{i=1}^n \|\EE[X_i^{\otimes 2} \|X_i\|^2]\|}{n} \right)^{1/2}.
\eeq
Finally, remarking that $C' < 14$ and that $\EE[X_i^{\otimes 2}] = I_d$ for all $i$ whenever $(X_i)_{i \in \{1,\dots,n\}}$ are identically distributed concludes the proof of (\ref{eq:W2mainthm}). 

\section{Rates of the multi-dimensional CLT for $W_p$ distances}
\label{sec:CTLcompp}
Let $p > 2, q \in[0,2]$ and $m = \min(2, p+q-2)$. Let $X_1,\dots,X_n$ be independent random variables taking values in $\RR^d$ and such that
\begin{itemize}
\item $\forall i \in \{1,\dots,n\}, \EE[X_i] = 0$; 
\item $\sum_{i=1}^n \EE[X_i^{\otimes 2}] = n I_d$ and
\item $\sum_{i=1}^n \EE[\|X_i\|^{p+q}] < \infty$.
\end{itemize} 
The aim of this Section is to prove the following result. 
\begin{theorem}
\label{thm:WPCTL}
Under the above setting, taking 
\[
\forall l \in [0,2], M(l)^2 = \frac{1}{n} \sum_{i=1}^n \|\EE[X_i^{\otimes 2}]\|^2 \EE[\|X_i\|^l]^2 + \|\EE[X_i^{\otimes 2} \|X_i\|^l]\|^2,
\]
we have that there exists $C_p > 0$ such that
\begin{multline*}
C_p W_2(\nu_n,\gamma) \leq \frac{\left(\sum_{i=1}^n \EE[\|X_i\|^{p+q}]\right)^{1/p}}{n^{1/2+q/2p}} + \frac{\left(\sum_{i=1}^n \EE[\|X_i\|^{2+m}]\right)^{1/2}}{n^{1/2+m/4}}  \\
+ \frac{\left(\sum_{i=1}^n \|\EE[X_i^{\otimes 2}]\|^2\right)^{1/2}}{n} + \begin{cases}
\frac{ M(0)}{n^{1/2}} + \frac{M(m) \log(n)}{n^{m/2}} \text{ if $m < 1$} \\
\frac{M(0)}{n^{1/2}} + \frac{M(1) \log(n)}{n^{1/2}} \text{ if $ 1 \leq m < 2$} \\
\left(\frac{M(0) M(2) }{n}\right)^{1/2} \text{ if $m = 2$}
\end{cases}.
\end{multline*}
\end{theorem}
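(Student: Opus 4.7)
The plan is to mirror the proof of Theorem~\ref{thm:W2CTL} in Section~\ref{sec:CTLcomp2}, with Theorem~\ref{thm:WpGaussexch} replacing Theorem~\ref{thm:mainGauss}. I apply Theorem~\ref{thm:WpGaussexch} with $s = 1/n$ to the same exchangeable stochastic process
\[
(S_n)_t = S_n + \frac{(X_I' - X_I)\,1_{\|X_I\| \vee \|X_I'\| \leq \sqrt{n \Delta(t)}}}{\sqrt{n}},
\]
whose increments satisfy $\|(S_n)_t - S_n\| \leq 2\sqrt{n\Delta(t)}$, so condition~(\ref{eq:condition3}) is immediate once $\EE[\|X_i\|^{p+q}] < \infty$. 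This yields
\[
W_p(\nu_n,\gamma) \leq \int_0^\infty e^{-t}\,\EE\bigl[S_p(t)^{p/2}\bigr]^{1/p}\,dt,
\]
where, after the same algebraic rearrangements as in Section~\ref{sec:CTLcomp2}, $S_p(t)$ can be written as $\sum_{k \geq 1} c_k(t)\,\|A_k(t)\|^2$ with $A_k(t) = \EE\bigl[\tfrac{1}{n}\sum_{i=1}^n Y_{k,i}(t) \mid S_n\bigr]$ and coefficients $c_k(t)$ carrying the $\max(1,p-1)^{k-1}$ factors from Theorem~\ref{thm:WpGaussexch}.

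The new analytic step is the passage from $\EE[S_p(t)^{p/2}]^{1/p}$ to manageable expressions. Minkowski's inequality in $L^{p/2}$ followed by subadditivity of the square root gives
\[
\EE[S_p(t)^{p/2}]^{1/p} \leq \sum_{k \geq 1} \sqrt{c_k(t)}\,\EE[\|A_k(t)\|^p]^{1/p},
\]
and Jensen on the conditional expectation reduces each summand to $\EE[\|A_k(t)\|^p]^{1/p} \leq n^{-1}\EE[\|\sum_i Y_{k,i}(t)\|^p]^{1/p}$. Since the $Y_{k,i}(t)$ are independent across $i$ with biases $\EE[Y_{k,i}(t)]$ whose squared norms are the $I_{k,i}(t)$ of Section~\ref{sec:CTLcomp2}, a Rosenthal-type inequality applied to the centered sum yields
\[
\EE\Bigl[\Bigl\|\sum_i Y_{k,i}\Bigr\|^p\Bigr]^{1/p} \leq \Bigl\|\sum_i \EE[Y_{k,i}]\Bigr\| + C_p\Bigl(\bigl(\sum_i J_{k,i}\bigr)^{1/2} + \bigl(\sum_i \EE[\|Y_{k,i} - \EE[Y_{k,i}]\|^p]\bigr)^{1/p}\Bigr),
\]
splitting each contribution into a bias piece, an $L^2$ piece, and an $L^p$ piece.

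The bias and $L^2$ pieces reuse the bounds on $I_{k,i}(t)$ and $J_{k,i}(t)$ already derived in Section~\ref{sec:CTLcomp2}, and the same three-regime integration in $t$ (with cut-off $t_0 = 1/n$) produces the $M(l)$-dependent quantities together with the $(\sum_i \EE[\|X_i\|^{2+m}])^{1/2}/n^{1/2+m/4}$ and $(\sum_i \|\EE[X_i^{\otimes 2}]\|^2)^{1/2}/n$ contributions, each multiplied by a finite $p$-dependent constant. The $L^p$ piece is the genuinely new ingredient: for $k = 1$, Markov applied to the indicator $1_{\|X_i\| \vee \|X'_i\| \geq \sqrt{n\Delta(t)}}$ gives
\[
\EE[\|Y_{1,i}(t)\|^p] \lesssim (n\Delta(t))^{-q/2}\,\EE[\|X_i\|^{p+q}],
\]
and the resulting $e^{-t}\,n^{-1/2-q/(2p)}\Delta(t)^{-q/(2p)}$ is integrable in $t$ since $q/(2p) < 1/2$, producing exactly the $(\sum_i \EE[\|X_i\|^{p+q}])^{1/p}/n^{1/2+q/(2p)}$ term in the statement; higher-$k$ contributions from the $L^p$ branch are dominated by the same quantity after summing a geometric series in $k$.

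The main obstacle is bookkeeping: one must verify that the $\max(1,p-1)^{k-1}$ factors from Theorem~\ref{thm:WpGaussexch}, combined with the $p$-dependent Rosenthal constants at each $k$, can all be absorbed into a single $C_p$, and that the infinite series in $k$ remains convergent. As in the proof of Theorem~\ref{thm:W2CTL}, this convergence follows from the truncation $\|X_i\| \vee \|X'_i\| \leq \sqrt{n\Delta(t)}$, which forces a geometric decay that cancels the $k!\,(n\Delta(t))^{k-1}$ growth in the denominators of $S_p(t)$.
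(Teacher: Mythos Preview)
Your proposal is correct and follows essentially the same route as the paper: apply Theorem~\ref{thm:WpGaussexch} to the truncated exchangeable process, remove the conditioning via Jensen, split the sum over $k$ by Minkowski, and then apply a multidimensional Rosenthal inequality (the paper cites Pinelis) to each sum over $i$, producing exactly the bias/$L^2$/$L^p$ trichotomy you describe---the paper calls the $L^p$ pieces $K_{k,i}(t)$ and bounds them just as you do. The only cosmetic difference is that the paper applies Jensen first (passing from $S_p(t)$ to an unconditioned $R(t)$) and then Minkowski, whereas you reverse the order; the resulting estimates and the integration in $t$ are identical.
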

Taking $((S_n)_t)_{t \geq 0}$ as in the previous question, we have that $((S_n)_0,(S_n)_t)$ and $((S_n)_t,(S_n)_0)$ follow the same law for any $t > 0$. 
Therefore, we can apply Theorem~\ref{thm:WpGaussexch} and perform computations similar to those of the previous Section in order to obtain 
\[
W_p(\nu_n, \gamma) \leq \int_0^\infty e^{-t} \EE[R(t)^{p/2}]^{1/p} dt,
\]
with 
\begin{align*}
R(t) \coloneqq  & \frac{1}{n} \left\| \sum_{i=1}^n (X_i - X'_i)1_{\|X'_i\| \vee \|X_i\| \geq \sqrt{n \Delta(t)}}\right\|^2 \\
& +  \frac{p-1}{\Delta(t)}  \left\|\sum_{i=1}^n \left( \frac{(X'_i - X_i)^{\otimes 2}}{2n} 1_{\|X'_i\| \vee \|X_i\| \leq \sqrt{n \Delta(t)}} \right) - I_d \right\|^2 \\
& +   \sum_{k > 2} \frac{(p-1)^{k-1} \left\|\EE\left[\sum_{i=1}^n (X'_i - X_i)^{\otimes k} 1_{\|X'_i\| \vee \|X_i\| \leq \sqrt{n \Delta(t)}} \mid S_n\right]\right\|^2}{4 n^{k} (k-1) ! \Delta(t)^{k - 1}}.
\end{align*} 
Then, using a multi-dimensional version of Rosenthal inequality such as Theorem 5.2 \citep{pinelis1994}, we obtain that there exists $C_p > 0$ such that 
\[ 
\EE[R(t)^{p/2}] \leq C_p \sum_{k=1}^\infty \frac{(p-1)^{k-1} \left(\left(\sum_{i=1}^n n I_{k,i}(t) + J_{k,i}(t)\right)^{p/2} + \sum_{i=1}^n K_{k,i}(t) \right)}{4 n^k (k-1)! \Delta(t)^{k-1}},
\]
where the $I_{k,i}(t)$ and $J_{k,i}(t)$ are the same as in the previous Section and 
\begin{align*}
K_{1,i}(t) \coloneqq & \EE[\|X_i - X'_i\|^p 1_{\|X'_i\| \vee \|X_i\| \geq \sqrt{n \Delta(t)}}], \\
K_{2,i}(t) \coloneqq & \EE[\|(X'_i - X_i)^{\otimes 2} 1_{\|X'_i\| \vee \|X_i\| \leq \sqrt{n \Delta(t)}} - 2 \EE[X_i^{\otimes 2}]\|^p], \\
\forall k > 2, K_{k,i}(t) \coloneqq & \EE[ \|X'_i - X_i\|^{kp} 1_{\|X'_i\| \vee \|X_i\| \leq \sqrt{n \Delta(t)}} ].
\end{align*}
Then, using arguments similar to the ones used to bound the $J_{k,i}$, 
\begin{align*}
K_{1,i}(t) \leq  & 2^{p+1} (n \Delta(t))^{-q/2} \EE[\|X_i\|^{p+q}], \\
K_{2,i}(t) \leq & 2^{2 p - 1} \|\EE[X_i^{\otimes 2}]\|^p + 2^{3p-1} (n \Delta(t))^{(p-q)/2} \EE[\|X_i\|^{p+q}], \\
\forall k > 2, K_{k,i}(t) \leq & 2^{kp} (n \Delta(t))^{((k-1)p - q)/2} \EE[ \|X_i\|^{p+q}].
\end{align*}
Therefore, there exists $C_p > 0$ such that 
\begin{multline*}
C_p \EE[R(t)^{p/2}]^{1/p} \leq \frac{\left(\sum_{i=1}^n \|\EE[X_i^{\otimes 2}]\|^2 \right)^{1/2}}{n \Delta(t)^{1/2}}  + \frac{\left(\sum_{i=1}^n \EE[\|X_i\|^{2+m}] \right)^{1/2}}{n^{1/2 + m/4} \Delta(t)^{m/4}} \\
+ \frac{\left( \sum_{i=1}^n \|\EE[X_i^{\otimes 2}]\|^p \right)^{1/p}}{n \Delta(t)^{1/2}} + \frac{\left(\sum_{i=1}^n \EE[\|X_i\|^{p+q}] \right)^{1/p}}{n^{1/2 + q/2p} \Delta(t)^{q/(2p)}}  \\
+ \inf_{l \in [0,m]} \frac{\left(\sum_{i=1}^n \|\EE[X_i^{\otimes 2}]\|^2 \EE[\|X_i\|^l]^2 + \|\EE[X_i^{\otimes 2} \|X_i\|^l]\|^2 \right)^{1/2}}{(n \Delta(t))^{(1+l)/2}}  
\end{multline*}
and integrating with respect to $t$ following the arguments of the previous Section concludes the proof of Theorem~\ref{thm:WPCTL} while (\ref{eq:Wpmainthm}) is obtained following the same computations as in Section~\ref{sec:iidcase}. 

\section{Technical results}
\label{sec:technical}
In this Section, we provide the proofs of the intermediary results used to derive Theorems~\ref{thm:mainGauss},\ref{thm:WpGauss} and \ref{thm:WpGaussexch}.
\subsection{Proof of Lemma~\ref{lem:analytic}}
\label{sec:analproof}
Let $\test$ be a bounded and measurable function on $\RR^d$, let $t > 0$ and let $\multiindex$ be a multi-index. By (\ref{eq:ippgauss}), we have 
\[
|\partial^\multiindex P_t \test|^2 \leq \frac{\multiindex!}{(e^{2t}-1)^{|\multiindex|}} P_t \test^2
\]
and, since $\test$ is bounded, there exists $M > 0$ such that 
\beq
\label{eq:derivativebornee}
|\partial^\multiindex P_t \test|^2 \leq \frac{M \multiindex !}{(e^{2t}-1)^{|\multiindex|}}. 
\eeq
Then, since $\|X_t - X_0\|$ is bounded as well, we have that there exists $C> 0$  such that 
\beq 
\label{eq:derivativesumbornee}
\sum_{|\multiindex| >0} \left| \frac{(X_t-X_0)^\multiindex}{\multiindex !} \partial^\multiindex P_t \test(X_0) \right| \leq \sum_{|\multiindex| >0} \frac{C^{|\multiindex|}}{\sqrt{\multiindex !}} < \infty. 
\eeq
almost surely. Therefore
\[
\EE\left[\sum_{|\multiindex| >0} \left| \frac{(X_t-X_0)^\multiindex}{\multiindex !} \partial^\multiindex P_t \test(X_0) \right| \right] < \infty
\]
and 
\[
\sum_{|\multiindex| >0} \EE\left[\frac{(X_t-X_0)^\multiindex}{\multiindex !} \partial^\multiindex P_t \test(X_0)\mid X_0 \right]
\]
exists.

Now, using a Taylor expansion with remainder, we obtain that there exists $\xi$ on the segment $[X_0,X_t]$ such that
\begin{multline*}
P_t \test(X_t) - P_t \test(X_0) = \\
\sum_{0<|\multiindex|<l} \frac{(X_t-X_0)^{\multiindex}}{\multiindex !} \partial^\multiindex P_t \test(X_0) + \sum_{|\multiindex|=l} \frac{(X_t-X_0)^{\multiindex}}{\multiindex!}  \partial^\multiindex P_t \test (\xi). 
\end{multline*}
From here, we have  
\begin{multline*}
\left| P_t \test(X_t) - P_t \test(X_0) - \sum_{0<|\multiindex|<l} \frac{1}{\multiindex!} (X_t-X_0)^{\multiindex} \partial^\multiindex P_t\test  (X_0) \right| \leq \\
 \sum_{|\multiindex| = l} \frac{|X_t-X_0|^\multiindex}{\multiindex!} |\partial^\multiindex P_t \test(\xi)|.
\end{multline*}
Then, by (\ref{eq:derivativesumbornee}),
\[
\EE[P_t \test(X_t) - P_t \test(X_0)]  = \EE\left[\sum_{|\multiindex| >0} \frac{(X_t-X_0)^\multiindex}{\multiindex !} \partial^\multiindex P_t \test(X_0) \right].
\]
and
\[
\EE[P_t \test(X_t) - P_t \test(X_0)] = \sum_{|\multiindex|> 0} \EE\left[ \frac{1}{\multiindex !} \EE[(X_t-X_0)^{\multiindex} \mid X_0] \partial^\multiindex P_t \test(X_0)\right].
\]
\subsection{Proof of Lemma~\ref{lem:hypercon}}
\label{sec:hypercon}
Let $(M_\multiindex)_{\multiindex \in \NN^d}$ such that $M_\multiindex \in \RR^d$ for any multi-index $\multiindex$ and let $Z$ be a Gaussian random variable. 
Let us start with the case $1 \leq p < 2$. By Jensen's inequality,
\begin{align*}
\EE\left[\left\| \sum_{\multiindex} M_\multiindex H_\multiindex(Z) \right\|^{p}\right]^{2/p} & \leq \EE\left[\left\|\sum_{\multiindex} M_\multiindex H_\multiindex(Z) \right\|^2\right] \\
 & \leq \EE\left[\sum_{\multiindex, \multiindex'} M_\multiindex \cdot M_{\multiindex'} H_\multiindex(Z) H_{\multiindex'}(Z)\right].
\end{align*}
Then, since $\EE[H_\multiindex(Z) H_{\multiindex'}(Z)] = 0$ for any two different multi-indices $\multiindex, \multiindex'$, 
\[
\EE\left[\left\| \sum_{\multiindex} M_\multiindex H_\multiindex(Z) \right\|^{p}\right]^{2/p} \leq \sum_{\multiindex} \multiindex ! \|M_\multiindex\|^2.
\]

Now, let $p > 2$ and $t \coloneqq \log(\sqrt{p-1})$. 
Since the Ornstein-Uhlenbeck semigroup $(P_t)_{t \geq 0}$ is hypercontractive (see e.g. Theorem 5.2.3 \citep{Markov}), we have 
\[
\forall \test \in L^2(\gamma), \EE[|P_t \test(Z)|^p]^{1/p} \leq \EE[\test(Z)^2]^{1/2}. 
\]
This inequality can be readily extended to vector-valued functions $\test$, in which case we have
\[
\forall \test, \|\test\|\in L^2(\gamma), \EE[\|P_t \test(Z)\|^p]^{1/p} \leq \EE[(P_t \|\test(Z)\|)^p]^{1/p} \leq \EE[\|\test(Z)\|^2]^{1/2}.
\]
For any multi-index $\multiindex$, the Hermite polynomial $H_\multiindex$ is an eigenvector of $P_t$ with eigenvalue $e^{-  |\multiindex| t} = (p-1)^{-|\multiindex|/2}$. 
Therefore, 
\begin{align*}
\EE\left[\left\|\sum_{\multiindex} M_{\multiindex} H_\multiindex(Z)\right\|^{p}\right]^{2/p} & = \EE\left[\left\|\sum_{\multiindex} (p-1)^{|\multiindex|/2} M_\multiindex P_t H_{\multiindex}(Z)\right\|^p\right]^{2/p} \\
& =  \EE\left[\left\|P_t \sum_{\multiindex} (p-1)^{|\multiindex|/2} M_\multiindex H_{\multiindex}(Z)\right\|^p\right]^{2/p} \\
& \leq \EE\left[\left\|\sum_{\multiindex} (p-1)^{|\multiindex|/2} M_\multiindex H_{\multiindex}(Z)\right\|^2\right] \\
& \leq \sum_{\multiindex}(p-1)^{|\multiindex|} \multiindex !  \|M_\multiindex\|^2,
\end{align*}
concluding the proof.
\section{Approximation arguments}
\label{sec:W2approx}
In this Section, we present the approximation arguments necessary to conclude the proof of Theorem \ref{thm:WpGaussexch}. Similar arguments can be used to obtain 
Theorems~\ref{thm:mainGauss} and \ref{thm:WpGauss}.

Suppose the measure $\nu$ and the stochastic process $(X_t)_{t \geq 0}$ satisfy the assumptions of Theorem~\ref{thm:WpGaussexch}.  
Let $s > 0$ and
\begin{align*} 
S_p(t) & \coloneqq \left\|\EE\left[\frac{X_t-X_0}{s}+X_0 \mid X_0\right]\right\|^2 \\
& +  \frac{ \max(1, p-1)}{e^{2t}-1}  \left\|\EE\left[\frac{(X_t-X_0)^{\otimes 2}}{2 s} - I_d \mid X_0 \right]\right\|^2 \\
& + \sum_{k > 2} \frac{\max(1, p-1)^{k - 1} }{4 s^2 (k-1) ! (e^{2t}-1)^{k - 1}}  \|\EE[(X_t-X_0)^{\otimes k} \mid X_0]\|^2.
\end{align*}

Let $R > 1, \epsilon_1 = R^{-1}$ and $0<\epsilon_2<1$ . For any $t > 0$, let $X_t^R$ be the orthogonal projection of $X_t$ on $\mathcal{B}(0,R)$, the ball of radius $R$ centered at $0$. Let $Z$ be a standard normal random variable, $N$ be a random variable with smooth density and taking values in the ball of radius $1$ and let $I$ be a Bernoulli random variable with parameter $\epsilon_1$ such that $(X_t)_{t \geq 0}, Z, N$ and $I$ are independent. Finally, let $ U = \epsilon_1 N$. 
For any $t > 0$, let
\[
\tilde{X}_t \coloneqq I Z + (1-I) ( U +  X^R_t 1_{\epsilon_2 \leq t \leq \epsilon_2^{-1}} + X^R_0 (1_{\epsilon_2 > t} + 1_{ t > \epsilon_2^{-1}})). 
\]
Let $\tilde{\nu}_R$ be the law of $\tilde{X}_0$. This measure admits a density $h$ with respect to the measure $\gamma$ such that $h = \epsilon_1 + f$ with $f \in \mathcal{C}^\infty_c$. Furthermore, for any $t > 0$,  $(\tilde{X}_0,\tilde{X}_t)$ and  $(\tilde{X}_t,\tilde{X}_0)$ follow the same law. Therefore, we can follow the computations of Section~\ref{sec:WpGaussexch} and use the triangle inequality to obtain 
\begin{multline}
\label{eq:approxGaussmain}
W_p(\tilde{\nu}_R,\gamma) \leq \epsilon_1 \int_0^\infty e^{-t} \EE[S_Z(t)^{p/2}]^{1/p} dt + (\int_0^{\epsilon_2} + \int_{1/\epsilon_2}^\infty) e^{-t} \EE[\tilde{S}_{p,1}(t)^{p/2}]^{1/p} dt  \\
+ \int_{\epsilon_2}^{1/\epsilon_2}   e^{-t} \EE[\tilde{S}_{p,2}(t)^{p/2}]^{1/p} dt,
\end{multline}
where
\begin{align*}
S_Z(t) \coloneqq & \|Z\|^2 + \frac{  d\max(1, p-1) }{e^{2t} -1}, \\
\tilde{S}_{p,1}(t) \coloneqq & \|X_0^R + U\|^2 +  \frac{d\max(1, p-1)}{e^{2t} -1}
\end{align*}
and
\begin{align*}
\tilde{S}_{p,2}(t) \coloneqq  & \left\|\EE\left[\frac{X^R_t-X^R_0}{s} + (X^R_0 + U) \mid X^R_0 + U\right]\right\|^2 \\
& + \frac{\max(1, p-1)}{e^{2t} -1}  \left\|\EE\left[\frac{(X^R_t-X^R_0)^{\otimes 2}}{2 s}  - I_d \mid X^R_0 + U \right]\right\|^2 \\
& +  \sum_{k > 2} \frac{\max(1, p-1)^{k - 1}}{4 s^2 (k-1) ! (e^{2t}-1)^{k - 1}} \|\EE[(X^R_t-X^R_0)^{\otimes k} \mid X^R_0 + U]\|^2.
\end{align*}
First, since $Z$ admits a finite moment of order $p$, there exists $C > 0$ such that 
\beq
\label{eq:SZ}
\int_0^\infty e^{-t} \EE[S_Z(t)^{p/2}]^{1/p} dt \leq C. 
\eeq
Then, since $X_0^R$ is the orthogonal projection of $X_0$ on $\mathcal{B}(0,R)$, 
\[
\|X_0^R + U\| \leq \|X_0^R \| + \epsilon_1 \leq \|X_0\| + 1
\]
and, since $\nu$ admits a finite moment of order $p$, there exists $C > 0$ such that 
\[
\EE[\tilde{S}_{p,1}(t)^{p/2}]^{1/p} \leq C\left( 1 + \frac{1}{\sqrt{e^{2t} -1} }\right). 
\]
Therefore, there exists $C > 0$ such that 
\beq
\label{eq:approxGaussaux1}
(\int_0^{\epsilon_2} + \int_{1/\epsilon_2}^\infty) e^{-t} \EE[\tilde{S}_{p,1}(t)^{p/2}]^{1/p} dt \leq C (\sqrt{\epsilon_2} + e^{-\epsilon_2}).
\eeq
Now, let 
\begin{align*}
\tilde{S}_{p,3}(t) \coloneqq & \left\|\EE\left[\frac{X^R_t-X^R_0}{s} + X^R_0  \mid X^R_0 + U \right]\right\|^2 \\
& + \frac{\max(1, p-1)}{e^{2t} -1}  \left\|\EE\left[\frac{(X^R_t-X^R_0)^{\otimes 2}}{2 s}  - I_d \mid X^R_0 + U \right]\right\|^2 \\
& +  \sum_{k > 2} \frac{\max(1, p-1)^{k - 1} }{4 s^2 (k-1) ! (e^{2t}-1)^{k - 1}} \|\EE[(X^R_t-X^R_0)^{\otimes k} \mid X^R_0 + U]\|^2.
\end{align*}
By the triangle inequality, we have that
\[
\EE[\tilde{S}_{p,2}^{p/2}]^{1/p} \leq \EE[\tilde{S}_{p,3}^{p/2}]^{1/p} + \EE[\|U\|^{p}]^{1/p}
\]
and, since $\|U\| \leq \epsilon_1$, 
\beq
\label{eq:approxGaussaux0}
\EE[\tilde{S}_{p,2}^{p/2}]^{1/p} - \EE[\tilde{S}_{p,3}^{p/2}]^{1/p} \leq \epsilon_1. 
\eeq
Finally, let 
\begin{align*}
\tilde{S}_{p,4}(t) \coloneqq & \left\|\EE\left[\frac{X^R_t-X^R_0}{s} + X^R_0  \mid X_0 \right]\right\|^2 \\
& + \frac{\max(1, p-1)}{e^{2t} -1}  \left\|\EE\left[\frac{(X^R_t-X^R_0)^{\otimes 2}}{2 s}  - I_d \mid X_0  \right]\right\|^2 \\
& +  \sum_{k > 2} \frac{\max(1, p-1)^{k - 1} }{4 s^2 (k-1) ! (e^{2t}-1)^{k - 1}} \|\EE[(X^R_t-X^R_0)^{\otimes k} \mid X_0]\|^2.
\end{align*}
Since $(X^R_t)_{t \geq 0}$ and $U$ are independent and since $X^R_0$ is $X_0$-measurable, we have
\beq
\label{eq:approxGaussaux2}
\EE[\tilde{S}_{p,3}(t)^{p/2}]^{1/p} \leq \EE[\tilde{S}_{p,4}(t) ^{p/2}]^{1/p}.
\eeq
From here, 
\begin{align*}
&\tilde{S}_{p,4}(t)  - S_p(t) - \left(\|X_0^R\|^2 + \frac{d \max(1, p-1)}{e^{2t} - 1} \right)  1_{X_0 \notin \mathcal{B}(0,R)} \\
&  \leq  \sum_{k > 0} \frac{\max(1, p-1)^{k-1} \EE[\|X^R_t - X_0^R\|^{2k}}{4 s^2 (k-1)!(e^{2t}-1)^{k-1}}  (1_{X_t \notin \mathcal{B}(0,R)} + 1_{X_0 \notin \mathcal{B}(0,R)}) \mid X_0] \\
& \leq \EE\left[\sum_{k >0} \frac{\max(1, p-1)^{k-1}\|X^R_t - X_0^R\|^{2k} }{4 s^2 (k-1)! (e^{2t}-1)^{k-1}} (1_{X_t \notin \mathcal{B}(0,R)} + 1_{X_0 \notin \mathcal{B}(0,R)}) \mid X_0 \right] \\
& \leq \frac{1}{4 s^2} \EE\left[\|X^R_t - X^R_0\|^2 e^{\frac{\max(1, p-1) \|X^R_t - X^R_0\|^2}{e^{2t} -1}}  (1_{X_t \notin \mathcal{B}(0,R)} + 1_{X_0 \notin \mathcal{B}(0,R)}) \mid X_0 \right]. 
\end{align*}
Thus, applying the triangle inequality Jensen's inequality yields  
\begin{align*}
\EE[&\tilde{S}_{p,4} (t)^{p/2}]^{1/p} \leq  \EE[S_{p}(t)^{p/2}]^{1/p} + \EE[\|X_0^R\|^{p} 1_{X_0 \notin \mathcal{B}(0,R)}]^{1/p} \\
&+ \left(P(X_0 \notin \mathcal{B}(0,R))\frac{d \max(1, p-1)}{e^{2t} - 1}\right)^{1/2} \\& 
+ \frac{1}{2 s} \EE\left[\|X^R_t - X^R_0\|^p e^{\frac{p \max(1, p-1) \|X^R_t - X^R_0\|^2}{2(e^{2t} -1)}}  (1_{X_t \notin \mathcal{B}(0,R)} + 1_{X_0 \notin \mathcal{B}(0,R)})^p \right]^{1/p}.
\end{align*}
Since $X^R_t$ is the orthogonal projection of $X_t$ on the convex set $\mathcal{B}(0,R)$, we have $\|X_0^R\| \leq \|X_0\|$ and $\|X^R_t - X^R_0\| \leq \|X_t - X_0\|$. Hence, 
\begin{align*}
\EE[\tilde{S}_{p,4}& (t)^{p/2}]^{1/p} \leq \EE[S_{p}(t)^{p/2}]^{1/p} + \EE[\|X_0\|^{p} 1_{X_0 \notin \mathcal{B}(0,R)}]^{1/p}  \\ 
&+ \left(P(X_0 \notin \mathcal{B}(0,R))\frac{d \max(1, p-1)}{e^{2t} - 1}\right)^{1/2}  \\
&+ \frac{1}{2 s} \EE\left[\|X_t - X_0\|^p e^{\frac{p \max(1, p-1) \|X_t - X_0\|^2}{2(e^{2t} -1)}}  (1_{X_t \notin \mathcal{B}(0,R)} + 1_{X_0 \notin \mathcal{B}(0,R)})\right]^{1/p}.
\end{align*}
By (\ref{eq:condition3}), there exists $\xi,M > 0$, depending on $\epsilon_2$, such that, for any $t \in [\epsilon_2, \epsilon_2^{-1}]$, 
\[
\EE\left[\|X_t - X_0\|^{p(1+\xi)} e^{\frac{ (1+\xi) p \max(1, p-1) \|X_t - X_0\|^2}{2(e^{2t}-1)}}\right] \leq M. 
\]
Hence, using H\"older's inequality, we obtain that there exists $M'(\epsilon_2), C(\epsilon_2) > 0$ such that 
\begin{multline*}
\EE[\tilde{S}_{p,4}(t)^{p/2}]^{1/p} \leq \EE[S_{p}(t)^{p/2}]^{1/p} + \EE[\|X_0\|^{p} 1_{X_0 \notin \mathcal{B}(0,R)}]^{1/p} \\
 + \left(P(X_0 \notin \mathcal{B}(0,R))\frac{d \max(1, p-1)}{e^{2t} - 1}\right)^{1/2}  
 + M'(\epsilon_2) P(X_0 \notin \mathcal{B}(0,R))^{C(\epsilon_2)}.
\end{multline*}
Combining this bound with (\ref{eq:approxGaussmain}), (\ref{eq:SZ}), (\ref{eq:approxGaussaux1}), (\ref{eq:approxGaussaux0}) and (\ref{eq:approxGaussaux2}), we obtain that there exists $C > 0$ and $C_1(\epsilon_2),C_2(\epsilon_2) > 0$ such that 
\begin{multline*}
W_p(\tilde{\nu}_R,\gamma) \leq \int_0^\infty \EE[S_p(t)^{p/2}]^{1/p} dt +  C_1(\epsilon_2)P(X_0 \notin \mathcal{B}(0,R))^{C_2(\epsilon_2)} \\
+ C(\sqrt{\epsilon_2} + e^{-1/\epsilon_2} + \epsilon_1 +  \EE[\|X_0\|^{p} 1_{X_0 \notin \mathcal{B}(0,R)}]^{1/p}).
\end{multline*}
Since $X_0$ has a finite moment of order $p$ and since $\epsilon_1 = R^{-1}$, letting $R$ go to infinity and $\epsilon_2$ go to zero yields 
\[
\lim_{R \rightarrow \infty} W_p(\tilde{\nu}_R,\gamma) \leq \int_0^\infty \EE[S_{p}(t)^{p/2}]^{1/p} dt.
\]
On the other hand, 
when $R$ goes to infinity, we have that $\tilde{\nu}_R$ converge weakly to $\nu$ and the $p$-moment of $\tilde{\nu}_R$ converges to the $p$-moment of $\nu$. Thus, by Theorem~6.9 \citep{Villani}, $W_p(\tilde{\nu}_R, \nu)$ converges to zero as $R$ goes to infinity. Therefore, 
\[
W_p(\nu,\gamma) \leq \lim_{R \rightarrow \infty} (W_p(\tilde{\nu}_R,\gamma) + W_p(\tilde{\nu}_R, \nu)) \leq \int_0^\infty \EE[S_{p}(t)^{p/2}]^{1/p} dt,
\]
concluding the proof of Theorem \ref{thm:WpGaussexch}.  

\section*{Acknowledgements} 
The author would like to thank Michel Ledoux for his many comments and advice regarding the redaction of this paper as well as J\'er\^ome Dedecker, Yvik Swan, Fr\'ed\'eric Chazal and anonymous reviewers for their multiple remarks. 

\bibliography{Bibliography}

\end{document}